\newcommand{\RR}{\mathbb R}
\newcommand{\ZZ}{\mathbb Z}
\newcommand{\CC}{\mathbb C}
\newcommand{\NN}{\mathbb N}
\newcommand{\cpt}{\mathbb K}
\def\bK{\mathbf K}
\def\Perf{\mathtt{Perf}}
\def\KKcat{\mathtt{KK}}
\def\ev{\textup{ev}}
\def\od{\textup{od}}
\def\id{\mathrm{id}}
\def\End{\textup{End}}
\def\Hom{\textup{Hom}}
\def\op{\textup{op}}
\def\prot{\hat{\otimes}}
\def\rep{\textup{rep}}
\def\sC{\text{$\sigma$-$C^*$}}
\def\Hmo{\mathtt{Hmo}}
\def\K{\textup{K}}
\def\Sps{\mathtt{Sp^\Sigma}}
\def\Sp{\mathtt{Sp}}
\def\hSp{\mathtt{hSp}}
\def\Kw{\mathbf{K}_\Sigma^{\textup{top}}}
\def\cE{\mathcal{E}}
\def\cF{\mathcal{F}}
\def\cS{\mathcal S}
\def\DGcorr{\mathtt{NCC_{dg}}}
\def\pt{\textup{pt}}
\def\DGcat{\mathtt{DGcat}}
\def\CAlg{\mathtt{{C^*}}}
\def\KK{\textup{KK}}
\def\E{\textup{E}}
\def\H{\textup{H}}
\def\End{\textup{End}}
\def\cC{\mathcal C}
\def\1{\bf{1}}
\def\Csep{\mathtt{SC^*}}
\def\Map{\textup{Map}}
\def\cD{\mathcal D}
\def\TT{\mathbb{T}}
\def\CT{\textup{CT}}
\def\A{\textup{A}}
\def\cA{\mathcal{A}}
\def\wK{\mathbf{K}^{\textup{w}}}
\def\cB{\mathcal{B}}
\newcommand{\map}{\rightarrow}
\newcommand{\functor}{\rightarrow}
\def\h{\mathtt{h}}
\def\KT{\textup{K}^{\textup{top}}}
\def\ua{\mathcal{U}_A}
\def\Oinf{\mathcal{O}_\infty}
\def\Th{\mathbf{Th}}
\def\hpf{\mathtt{HPf_{dg}}}
\def\mot{\mathtt{Mot^{loc}_{dg}}}
\def\ul{\mathcal{U}^{\mathtt{loc}}_{\mathtt{dg}}}
\def\one{\mathbf{1}}
\newcommand{\beq}{\begin{eqnarray}}
\newcommand{\beqn}{\begin{eqnarray*}}
\newcommand{\eeq}{\end{eqnarray}}
\newcommand{\eeqn}{\end{eqnarray*}}
\theoremstyle{definition}
\newtheorem{thm}{Theorem}[section]
\theoremstyle{definition}
\newtheorem*{Rem}{Remark}
\newtheorem{lem}[thm]{Lemma}
\newtheorem{prop}[thm]{Proposition}
\newtheorem{cor}[thm]{Corollary}
\newtheorem{ex}[thm]{Example}
\newtheorem{defn}[thm]{Definition}
\newtheorem{rem}[thm]{Remark}
\begin{document}

\title{Algebraic $\K$-theory, $\K$-regularity, and $\TT$-duality of $\Oinf$-stable $C^*$-algebras}
\author{Snigdhayan Mahanta}
\email{snigdhayan.mahanta@mathematik.uni-regensburg.de}
\address{Fakult{\"a}t f{\"u}r Mathematik, Universit{\"a}t Regensburg, 93040 Regensburg, Germany.}
\subjclass[2010]{46L85, 19Dxx, 46L80, 57R56}
\keywords{$\TT$-duality, noncommutative motives, operads, $\K$-theory, $\K$-regularity, symmetric spectra}
\thanks{This research was supported by the Deutsche Forschungsgemeinschaft (SFB 878 and SFB 1085), ERC through AdG 267079, and the Humboldt Professorship of M. Weiss.}
\dedicatory{Dedicated to Professor Marc A. Rieffel on the occasion of his 75th birthday.}

\begin{abstract}
We develop an algebraic formalism for topological $\TT$-duality. More precisely, we show that topological $\TT$-duality actually induces an isomorphism between noncommutative motives that in turn implements the well-known isomorphism between twisted $\K$-theories (up to a shift). In order to establish this result we model topological $\K$-theory by algebraic $\K$-theory. We also construct an $\E_\infty$-operad starting from any strongly self-absorbing $C^*$-algebra $\cD$. Then we show that there is a functorial topological $\K$-theory symmetric spectrum construction $\Kw(-)$ on the category of separable $C^*$-algebras, such that $\Kw(\cD)$ is an algebra over this operad; moreover, $\Kw(A\prot\cD)$ is a module over this algebra. Along the way we obtain a new symmetric spectra valued functorial model for the (connective) topological $\K$-theory of $C^*$-algebras. We also show that $\Oinf$-stable $C^*$-algebras are $\K$-regular providing evidence for a conjecture of Rosenberg. We conclude with an explicit description of the algebraic $\K$-theory of $ax+b$-semigroup $C^*$-algebras coming from number theory and that of $\Oinf$-stabilized noncommutative tori.
\end{abstract}

\maketitle

\begin{center}
{\bf Introduction}
\end{center}

Within the category of separable $C^*$-algebras $\Csep$ the {\em stable} ones, i.e., those $A\in\Csep$ satisfying $A\prot\cpt\cong A$, play a privileged role. For instance, it is known that they satisfy the Karoubi conjecture and appear very naturally in the context of twisted $\K$-theory. The results in this article demonstrate that {\em $\Oinf$-stable} separable $C^*$-algebras, i.e., those $A\in\Csep$ satisfying $A\prot\Oinf\cong A$, deserve a similar prominent status. Moreover, the {\em Cuntz algebra $\Oinf$} is {\em strongly self-absorbing}, which has several interesting ramifications. 

Ever since its inception by Kontsevich \cite{HMS} the homological mirror symmetry conjecture has promoted rich interaction between geometry, algebra, and (higher) category theory. Mirror symmetry is related to $\TT$-duality via the Strominger--Yau--Zaslow conjecture \cite{SYZ} and hence we believe that it is worthwhile to have an algebraic formalism for $\TT$-duality at our disposal. One aspect of this theory is the Bunke--Schick {\em topological $\TT$-duality}, which has received a lot of attention in the mathematical literature. It is insensitive to subtle geometric structures but its mathematical underpinnings are very well understood \cite{BunSch1,BunRumSch,BunSchSpiTho}. One of the objectives of this article is to develop an algebraic formalism for topological $\TT$-duality relating it to the theory of noncommutative motives \cite{KonNotes,KonMot,TabGarden,MarTabApplications}. Along the way we obtain several interesting applications to algebraic $\K$-theory and $\K$-regularity of $C^*$-algebras. The novelty of our approach lies in the use of the Cuntz algebra $\Oinf$ and the construction of certain operadic actions on (twisted) $\K$-theory.

Let us briefly describe our main results. Given any $C^*$-algebra $A$ we can functorially associate its noncommutative motive $\hpf(A)$ with it \cite{KQ}. As a mathematical object $\hpf(A)$ is a differential graded category that is defined purely algebraically. In Section \ref{Duality} we show that if $A$ and $A'$ are $\KK$-equivalent separable $C^*$-algebras, then $\hpf(A\prot\Oinf)$ and $\hpf(A'\prot\Oinf)$ are isomorphic objects in the category of noncommutative motives (cf. Theorem \ref{OMotive} and Corollary \ref{KKfac}). We also show that the nonconnective $\K$-theory of the noncommutative motive $\hpf(A\prot\Oinf)$ is naturally isomorphic to the topological $\K$-theory of $A$ (cf. Theorem \ref{OK}). It is known that under favourable circumstances topological $\TT$-duality can be expressed as a $\KK$-equivalence between two separable $C^*$-algebras \cite{BMRS,BMRS2}. Thus our results show that in such cases one actually has an isomorphism of noncommutative motives that implements the well-known isomorphism (up to a shift) between the twisted $\K$-theories. Since noncommutative motives constitute the universal cohomology theory of noncommutative spaces, our results demonstrate that topological $\TT$-duality implements an isomorphism of universal cohomology theories. The treatment here is completely algebraic; we model topological $\K$-theory via algebraic $\K$-theory following (and somewhat refining) our earlier approach in \cite{KQ}.

Let us now explain the significance of $\Oinf$ in this context that made no appearance in \cite{KQ}. It is a noteworthy example of a strongly self-absorbing $C^*$-algebra \cite{TomWin} with very interesting structural properties. Using a result from \cite{CorPhi} (see also \cite{KarWod}) one can deduce that nonconnective algebraic $\K$-theory agrees naturally with topological $\K$-theory for $\Oinf$-stable $C^*$-algebras. There has been considerable interest in associating symmetric spectra with $C^*$-algebras functorially, whose homotopy groups are the topological $\K$-theory groups. Various algebraic $\K$-theory {\em machines} (like Waldhausen $\K$-theory) canonically produce symmetric spectra (see Remark 1.2.6 of \cite{HSS}). Combining this fact with our techniques we obtain a new functorial symmmetric spectra valued model for the (connective) topological $\K$-theory of $C^*$-algebras (cf. Theorem \ref{GenHom} and Remark \ref{HSS}). This algebraic method completely circumvents the analytical difficulties that one needs to overcome in a direct approach like the one in \cite{JoaKHom}. Applying the technology of \cite{KQ} this result could have been deduced without invoking $\Oinf$. However, we exhibit more algebraic structure on $\K$-theory using our current formalism. Indeed, we construct for every strongly self-absorbing $C^*$-algebra $\cD$ an {\em $\E_\infty$-operad} that we call the {\em strongly self-absorbing $\cD$-operad} (cf. Definition \ref{D} and Proposition \ref{Einfinity}). Then we show that there is a symmetric spectra valued model for the topological $\K$-theory of $\cD$ that is an algebra over the $\cD$-operad; moreover, the topological $\K$-theory symmetric spectrum of any $\cD$-stable separable $C^*$-algebra is a module over this algebra (cf. Theorem \ref{Operad} for a more general formulation and also Example \ref{OpAction}). These operadic structures up to coherent homotopy in symmetric spectra can be further rectified to strict ones (see, for instance, \cite{MMSS,ElmMan}). For the $\infty$-categorical counterparts of related results the readers may refer to \cite{MyColoc}.

Using similar ideas as before we show that $A\prot\Oinf$ is $\K$-regular for any $C^*$-algebra $A$ (cf. Theorem \ref{Kreg}), providing evidence for a conjecture of Rosenberg \cite{RosComparison}. We carry out an explicit computation of the algebraic $\K$-theory of $ax+b$-semigroup $C^*$-algebras associated with number rings \cite{CunDenLac,LiSGC} (cf. Theorem \ref{ax+b}). Noncommutative tori constitute arguably the most widely studied class of noncommutative spaces. Their geometric invariants were studied extensively by Connes and Rieffel (see, for instance, \cite{ConBook,ConRie,RieNCTori}). We show that the algebraic $\K$-theory of noncommutative tori are explicitly computable after $\Oinf$-stabilization (cf. Theorem \ref{NCTori}). Using a powerful result of Rieffel \cite{RieNCTori}, one also obtains a clear understanding of the elements of the algebraic $\K$-theory groups in low degrees (see Remark \ref{RieElt}).

\begin{Rem}
Some of the arguments below exploit a cute trick (cf. Lemma \ref{Basic}). The range of applicability of this trick is much broader than the case explored here (see, for instance, Proposition 1.1.2 of \cite{RorClass}). The author is grateful to D. Enders for pointing out that Proposition \ref{CorPhi}, that uses this trick, can be generalized to all $C^*$-algebras of the form $A\prot B$ with $B$ {\em properly infinite}. We encourage the readers to consult \cite{CorPhi} for an even more general result.
\end{Rem}

\medskip
\noindent
{\bf Notations and Conventions:} In the sequel we denote the category of all (resp. separable) $C^*$-algebras by $\CAlg$ (resp. $\Csep$). We denote by $\bK(-)$ [resp. $\K(-)$] the nonconnective algebraic $\K$-theory spectrum [resp. $\K$-theory group] functor and by $\bK^{\textup{top}}(-)$ [resp. $\KT(-)$] the (twisted) topological $\K$-theory spectrum [resp. $\K$-theory group] functor on $\CAlg$. Unless otherwise stated, all spaces are assumed to be Hausdorff and $\prot$ will denote the maximal $C^*$-tensor product (expect in Example \ref{OpAction}). 

\medskip
\noindent
{\bf Acknowledgements:} The author would like to thank J. Cuntz, D. Enders, G. Horel, J. Lind, and A. Thom for beneficial discussions. The author is also grateful to U. Bunke, N. C. Phillips, and J. Rosenberg for constructive feedback. Careful reviews resulting in several corrections suggested by the anonymous referees have helped improve the exposition significantly. A part of this project was also supported by the fellowship for research scholars of the Max Planck Institute for Mathematics, Bonn.

\section{Topological $\TT$-duality and noncommutative motives} \label{Duality}
For the benefit of the reader we briefly discuss topological $\TT$-duality and noncommutative motives as well as their $\K$-theory before explaining our results.

\subsection{Topological $\TT$-duality} $\TT$-duality is an interesting phenomenon is string theory, some of whose mathematical aspects were studied in \cite{BEM2}. We are solely going to focus on axiomatic topological $\TT$-duality from \cite{BunSch1}, which builds upon the earlier work in \cite{BEM2}. Let $B$ be a topological base space. Consider the category of {\em pairs} $(E,h)$, where $\pi:E\map B$ is a principal $S^1$-bundle over $B$ and $h\in \H^3(E,\ZZ)$. Two such pairs $(E_1,h_1)$ and $(E_2,h_2)$ are isomorphic if there is an isomorphism $F:E_1\map E_2$ of principal bundles such that $F^*h_2 =h_1$. Two pairs $(E_1,h_1)$ and $(E_2,h_2)$ are said to be {\em $\TT$-dual} if there is a Thom class $\Th\in\H^3(S(V),\ZZ)$ for $S(V)$ such that $h_1=i_1^* \Th$ and $h_2= i_2^*\Th$. Here $S(V)$ is the sphere bundle of $V:= E_1\times_{S^1}\CC \oplus E_2\times_{S^1}\CC$ and $i_k: E_k\map S(V)$ are the canonical maps for $k=1,2$. This definition implies the following correspondence picture: Let $\pi_k: E_k\map B$ with $k=1,2$ be two principal $S^1$-bundles and $(E_1,h_1)$ and $(E_2,h_2)$ be $\TT$-dual pairs. Then there is a commutative diagram \beq
\xymatrix{
& E_1\times_B E_2 \ar[ld]_{\textup{pr}_1}\ar[dd]^q \ar[rd]^{\textup{pr}_2} \\
E_1 \ar[rd]_{\pi_1} && E_2 \ar[ld]^{\pi_2}\\
& B,
}
\eeq such that $\textup{pr}_1^* (h_1) =\textup{pr}_2^* (h_2)$. This basic correspondence picture relates topological $\TT$-duality to cohomological quantization (see, for instance, \cite{Nuiten,UrsQuant}).

In \cite{BunSch1} Bunke--Schick showed that the association $B\mapsto \{\text{isom. classes of pairs over $B$}\}$ as a functor on topological spaces is representable. The representing space $\mathbf{E}$ supports a universal pair and any pair on $B$ can be obtained up to isomorphism via a pullback along some map $B\map \mathbf{E}$ (defined uniquely up to homotopy). Using the explicit construction of the universal object and the $\TT$-dual of the universal pair the authors were able to prove the existence and uniqueness of $\TT$-duality for $S^1$-bundles. One of the salient features of $\TT$-duality is the following: If $(E_1,h_1)$ and $(E_2,h_2)$ are $\TT$-dual pairs, then there is an isomorphism of twisted $\K$-theories: \beq \label{Twist}
(\KT)^\od(E_1,h_1)\simeq (\KT)^\ev(E_2,h_2) \quad \text{ and } \quad (\KT)^\ev(E_1,h_1)\simeq (\KT)^\od(E_2,h_2).
\eeq The theory of topological $\TT$-duality is not limited to $S^1$-bundles. However, for more general ($\prod_{i=1}^n S^1$)-bundles with $n>1$ the theory becomes quite subtle \cite{BunRumSch,BunSchSpiTho} and sometimes necessitates the use of $C^*$-algebras \cite{MatRos2}. Moreover, $C^*$-algebras appear quite naturally in the context of twisted $\K$-theory \cite{RosCT}. Thus it seems natural to study $\TT$-duality via $C^*$-algebras from the outset. The readers may refer to \cite{RosStringDuality} for a survey on the interactions between $C^*$-algebras, $\K$-theory, noncommutative geometry, and $\TT$-duality. Some recent results indicate that $\TT$-duality can even be related to Langlands duality \cite{DanErp,BunNik}.

\subsection{Noncommutative motives} Much like motives in algebraic geometry serve as the receptacle for the universal cohomology theory for algebraic varieties (with various interesting realization functors), noncommutative motives constitute the universal cohomology theory for noncommutative spaces in the sense of Kontsevich \cite{KonMot,KonNotes}. The rough idea behind its construction is to begin with a reasonable category (actually a model category) of noncommutative spaces and then enforce certain properties that are expected of any cohomology theory, i.e., pass to the suspension stabilization and implement appropriate versions of Morita invariance and localization. The techniques involved in this process are to some extent informed by both algebraic geometry and homotopy theory. The theory of noncommutative motives has interesting applications to $\K$-theory as well as a wide variety of other areas in mathematics \cite{KonNotes,MarTabApplications}. We briskly review its rudiments following \cite{KelDG,Tab3}. 

Let $k$ be a field of characteristic zero (for our purposes it is $\CC$). Let $\DGcat$ denote the category of (small) differential graded (DG) categories over $k$. In this setting a noncommutative space is the same thing as a DG category. There is a model category of modules (and even a subcategory of perfect modules) over a DG category $\cA$, whose homotopy category denoted by $D(\cA)$ is the derived category of $\cA$. We refer the readers to section 3.8 of \cite{KelDG}, where it is explained how a functor between DG categories $F:\cA\map\cB$ induces a restriction of scalars functor $D(\cB)\map D(\cA)$. A morphism of DG categories $F:\cA\map\cB$ is called a {\em derived Morita equivalence} or simply a {\em Morita morphism} if the restriction of scalars functor $D(\cB)\map D(\cA)$ is an equivalence of triangulated categories. The category $\DGcat$ supports a model structure, whose weak equivalences are the Morita morphisms, and its homotopy category is denoted by $\Hmo$. In the module category over a DG category one can perform (homotopical) analogues of most of the operations that are present on the module category over a ring. Let $\rep(A, B)\subset D(\cA^{\op}\otimes^{\mathbb{L}}\cB)$ be the full triangulated subcategory consisting of those $\cA$-$\cB$-bimodules $X$, such that $X(a,-) \in D(\Perf (\cB))$ for every object $a\in\cA$. Set $\Hmo_0$ to be the category whose objects are DG categories and whose morphisms are $\Hmo_0(\cA,\cB):=\K_0(\rep(\cA,\cB))$ with composition induced by the tensor product of bimodules. There is a functor $\ua: \DGcat\functor\Hmo_0$ that should be regarded as the {\em pure} motive associated with a noncommutative space. The functor $\ua$ is identity on objects and sends a morphism of DG categories to its class in the Grothendieck group of bimodules. The category of pure noncommutative motives $\Hmo_0$ further maps into a triangulated category of {\em mixed} noncommutative motive $\mot$. The composite functor $$\ul:\DGcat\overset{\ua}{\functor}\Hmo_0\functor\mot$$ has the property that it sends every exact sequence of DG categories to an exact triangle in $\mot$. Recall that a diagram of DG categories $\cA\map\cB\map\cC$ is called an {\em exact sequence} if the induced sequence $D(\cA)\map D(\cB)\map D(\cC)$ of triangulated categories is Verdier exact. The construction of $\mot$ uses the theory of Grothendieck derivators that we leave out from the discussion. In this article we are solely going to focus on the category of pure noncommutative motives $\Hmo_0$. There is a stable $\infty$-categorical counterpart of noncommutative motives \cite{BluGepTab} and our results relating topological $\TT$-duality with noncommutative motives also admit a generalization to the $\infty$-categorical setup (see Section 4 of \cite{MyNSHLoc}).

\subsection{Nonconnective $\K$-theory of pure noncommutative motives} \label{NegK}
Any pure noncommutative motive is merely an object $\cA\in\DGcat$, since the functor $\ua:\DGcat\map\Hmo_0$ is identity on objects. Using the corepresentability result (see Theorem 6.1 of \cite{TabGarden}) the quickest definition of the nonconnective algebraic $\K$-theory spectrum of a DG category $\cA$ is $\bK(\cA):=\RR\Hom(\ul(k),\ul(\cA))$. Note that the category of mixed noncommutative motives $\mot$ is canonically enriched over spectra; this is a consequence of the general formalism of triangulated derivators that is deployed to construct $\mot$. The nonconnective algebraic $\K$-theory groups of $\cA$ can be defined as $\K_n(\cA):= \Hom(\ul(k),\ul(\cA)[-n])$ for all $n\in\ZZ$. By construction the functor $\bK(-)$ (resp. $\K_n(-)$) factors through the category of pure noncommutative motives $\Hmo_0$.

The nonconnective algebraic $\K$-theory of a DG category $\cA$ can also be constructed by more traditional methods that we presently explain. The category of perfect cofibrant DG modules $\Perf(\cA)$ over $\cA$ admits the structure of a complicial exact category, whose weak equivalences are those maps that are isomorphisms in $D(\cA)$. Recall that a module over $\cA$ is perfect if and only if it is a compact object in the derived category $D(\cA)$ (see Corollary 3.7 of \cite{KelDG}). Applying the Waldhausen $\K$-theory functor $\wK(-)$ to the complicial exact category $\Perf(\cA)$ one gets the connective $\K$-theory spectrum $\wK(\cA)$ of the DG category $\cA$. From \cite{SchDeloop} (see also Section 3.2.33 of \cite{SchSedano}) one learns that there is a suspension construction $\Sigma(-)$ of a complicial exact category $\cE$, such that there is a natural homotopy equivalence \beq \label{bond}\wK(\cE)\overset{\sim}{\map} \Omega\wK(\Sigma(\cE)),\eeq provided the triangulated category associated with $\cE$ is idempotent complete (e.g., $\cE=\Perf(\cA)$). The nonconnective $\K$-theory spectrum of $\cA$, denoted by $\bK(\cA)$, is the spectrum whose $n$-th space is $\wK(\Sigma^n(\Perf(\cA)))$ and the structure maps are furnished by \eqref{bond}. Any unital $k$-algebra $R$ can be viewed as a DG category with one object $\bullet$, such that $\End(\bullet)=R$. In this case the above construction applied to $\cE=\Perf(R)$ recovers Quillen's higher algebraic $\K$-theory of $R$ in nonnegative degrees and Bass' negative $\K$-theory in negative degrees.

\subsection{Our results} We denote the category of separable $C^*$-algebras by $\Csep$ and the bivariant $\K$-theory category by $\KKcat$. There is a canonical functor $\iota:\Csep\functor\KKcat$, which is identity on objects and admits a universal characterization \cite{Hig1,CunKK}. Building upon an earlier work of Quillen \cite{QuiNonunitalK0} the author constructed a functorial passage $\hpf$ from separable $C^*$-algebras to (pure) noncommutative motives. For any $A\in\Csep$ the DG category $\hpf(A)$ consists of cochain complexes $X$ of right $\tilde{A}$-modules, where $\tilde{A}$ is the unitization of $A$, that satisfy \begin{enumerate}                                                                                                                                                                                                                                                                                                                                                                                                                                                                                                                                                                                           \item $X$ is homotopy equivalent to a strictly perfect complex (of right $\tilde{A}$-modules), and                                                                                                                                                                                                                                                                                                                                                                                                                                                                                                                                                                                         \item the quotient complex $X/XA$ is acyclic.                                                                                                                                                                                                                                                                                                                                                                                                                                                                                                                                                                                           \end{enumerate} The association $A\mapsto\hpf(A)$ gives rise to a functor $\Csep\functor\Hmo_0$ in an evident manner. The following two results (amongst others) are proved in \cite{KQ}:

\begin{thm} \label{Tdual}
There is a dashed functor below making the following diagram of categories commute (up to a natural isomorphism): \beqn
\xymatrix{
\Csep\ar[rr]^{A\mapsto A\prot\cpt}\ar[d]^\iota && \Csep\ar[d]^{\hpf}\\
\KKcat\ar@{-->}[rr] &&\Hmo_0.
}
\eeqn
\end{thm}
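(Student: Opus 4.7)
The plan is to invoke the universal characterization of Kasparov's bivariant $\KK$-category established by Higson \cite{Hig1} and further clarified by Cuntz \cite{CunKK}: the canonical functor $\iota:\Csep\to\KKcat$ is universal among functors $F:\Csep\to\mathcal{C}$ into additive categories that are (a) homotopy invariant, (b) $\cpt$-stable, and (c) split exact. The target $\Hmo_0$ is additive because its hom-sets $\K_0(\rep(\cA,\cB))$ are Grothendieck groups. It therefore suffices to verify that the composite
$$F:A\longmapsto\hpf(A\prot\cpt)$$
has the three properties; universality then produces the desired dashed functor uniquely up to natural isomorphism, and commutativity of the diagram will be automatic.

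For $\cpt$-stability, I would exploit the isomorphism $\cpt\prot\cpt\cong\cpt$ together with the corner embedding $A\prot\cpt\hookrightarrow A\prot\cpt\prot\cpt$ induced by a fixed rank-one projection in $\cpt$. By unpacking the definition of $\hpf$ in terms of strictly perfect complexes of modules over the unitization with the quotient-acyclicity condition, this embedding gives rise to a Morita-type bimodule that induces an isomorphism in $\Hmo_0$. For split exactness, a split short exact sequence $0\to I\to A\to B\to 0$ in $\Csep$ remains split after tensoring with $\cpt$, and the splitting together with the functoriality of $\hpf$ yields the direct-sum decomposition $\hpf(A\prot\cpt)\cong\hpf(I\prot\cpt)\oplus\hpf(B\prot\cpt)$ in $\Hmo_0$ via the standard bookkeeping with perfect complexes on split extensions.

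Homotopy invariance is the main obstacle. Given a homotopy $H:A\to C([0,1],B)$, one must show that $\hpf((\ev_0\circ H)\prot\id_\cpt)$ and $\hpf((\ev_1\circ H)\prot\id_\cpt)$ agree as $\K_0$-classes in $\rep(\hpf(A\prot\cpt),\hpf(B\prot\cpt))$. The key input is that tensoring with $\cpt$ makes the algebra H-unital in the sense of Wodzicki, so by Suslin--Wodzicki (or equivalently Karoubi's theorem) the algebraic $\K$-theory of $A\prot\cpt$ coincides with its topological $\K$-theory, which is manifestly homotopy invariant. To upgrade this numerical statement to an equality of bimodule classes at the motivic level, I would construct an explicit interpolating $\hpf$-bimodule on the cylinder $A\prot\cpt\prot C([0,1])$ and invoke the splitness of the two evaluation morphisms (together with split exactness established in Step 2) to conclude that pull-back along $\ev_0$ and $\ev_1$ yield the same class in $\Hmo_0$.

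Once all three properties are in place, universality furnishes the dashed functor $\KKcat\to\Hmo_0$, and the natural isomorphism between its composition with $\iota$ and the functor $A\mapsto\hpf(A\prot\cpt)$ is part of the universal property. The most delicate point in this whole strategy is Step 3: the passage from homotopy invariance of numerical $\K$-theory to homotopy invariance at the level of bimodule classes in $\rep$-categories, which requires genuine use of the H-unitality of $\cpt$-stable algebras rather than a formal argument.
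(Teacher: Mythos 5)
Your overall strategy---reduce to the universal characterization of $\KKcat$ by checking that $A\mapsto\hpf(A\prot\cpt)$ is $C^*$-stable, split exact, and homotopy invariant---is the right one, and it is essentially the route the paper takes (the theorem is deferred to \cite{KQ}, but Corollary \ref{KKfac} runs exactly this pattern for the $\Oinf$-stabilized functor: verify stability and split exactness, then invoke the universal property). Your Steps 1 and 2 are fine in outline: split exactness is Lemma 3.1 of \cite{KQ}, and $C^*$-stability comes from matrix stability of $\hpf$ (Lemma 2.3 of \cite{KQ}) together with the fact that the corner embedding $A\prot\cpt\to A\prot\cpt\prot\cpt$ is homotopic to an isomorphism.

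The genuine gap is Step 3. Knowing $\K_*(A\prot\cpt)\cong\KT_*(A\prot\cpt)$ by Suslin--Wodzicki tells you that the composite of $\hpf(-\prot\cpt)$ with $\K$-theory is homotopy invariant; it says nothing about the morphisms $\hpf(\ev_0\prot\id_\cpt)$ and $\hpf(\ev_1\prot\id_\cpt)$ themselves, which live in $\K_0(\rep(-,-))$ of a bimodule category, not in the $\K$-theory of the algebras, so the ``upgrade to the motivic level'' is not a refinement of the Suslin--Wodzicki input but an entirely separate claim. Your fallback---use splitness of the two evaluations together with split exactness---is circular: both $\ev_0$ and $\ev_1$ are split by the same inclusion of constants $c:B\to C([0,1],B)$, and split exactness only gives $F(\ev_i)\circ F(c)=\id$ for $i=0,1$; this forces $F(\ev_0)=F(\ev_1)$ only if you already know $F(c)$ is invertible, which is precisely homotopy invariance. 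The correct (and much shorter) repair is Higson's theorem \cite{Hig1}: any $C^*$-stable, split exact functor from $\Csep$ to an additive category is automatically homotopy invariant. With that, Step 3 disappears entirely, H-unitality plays no role, and the universal characterization of $\KKcat$ furnishes the dashed functor exactly as you say.
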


\begin{thm} \label{NCK}
 For any $A\in\Csep$ the homotopy groups of the nonconnective $\K$-theory spectrum of $\hpf(A\prot\cpt)$ are naturally isomorphic to the topological $\K$-theory groups of $A$.
\end{thm}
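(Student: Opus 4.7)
The plan is to factor the desired isomorphism through the nonconnective algebraic $\K$-theory of $A\prot\cpt$ viewed as a non-unital $C^*$-algebra. More precisely, I would first identify $\bK(\hpf(A))$ with Quillen's nonconnective algebraic $\K$-theory spectrum of the non-unital ring $A$, and then specialize to $A\prot\cpt$ and invoke the Karoubi conjecture to match the right-hand side with topological $\K$-theory.

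For the first step, observe that conditions (1) and (2) in the definition of $\hpf(A)$ characterize a strictly perfect $\tilde{A}$-complex $X$ whose derived tensor product with $\tilde{A}/A\cong\CC$ is acyclic, since for strictly perfect $X$ one has $X/XA\cong X\otimes^{\mathbb{L}}_{\tilde{A}}\CC$. Hence the underlying complicial exact category of $\hpf(A)$ is the Schlichting kernel of the restriction-of-scalars functor $\Perf(\tilde{A})\to\Perf(\CC)$ induced by the augmentation $\tilde{A}\twoheadrightarrow\CC$. Applying Waldhausen's $\wK(-)$ and the Schlichting suspension recalled in Section \ref{NegK} therefore produces a fiber sequence of nonconnective spectra $\bK(\hpf(A))\to\bK(\tilde{A})\to\bK(\CC)$ that matches Quillen's definition of the nonconnective algebraic $\K$-theory spectrum $\bK(A)$ of the non-unital ring $A$ (cf. \cite{QuiNonunitalK0}). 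The identification $\bK(\hpf(A))\simeq\bK(A)$ is natural in $A\in\Csep$, since each of its constituents---unitization, passage to perfect complexes, Waldhausen's construction, and Schlichting's suspension---is functorial.

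For the second step, specialize to the stable $C^*$-algebra $A\prot\cpt$. The Karoubi conjecture, proved in positive degrees by Suslin--Wodzicki and extended to the full nonconnective setting by Corti\~{n}as--Phillips (see \cite{CorPhi}), supplies a natural equivalence $\bK(A\prot\cpt)\simeq\bK^{\textup{top}}(A\prot\cpt)$; the right-hand side is in turn naturally identified with $\bK^{\textup{top}}(A)$ by the $\cpt$-stability of topological $\K$-theory. Passing to homotopy groups then delivers the asserted natural isomorphism $\K_n(\hpf(A\prot\cpt))\cong\KT_n(A)$ for every $n\in\ZZ$. The hard part will be the first step: one must verify that the triangulated category underlying $\hpf(A)$ is idempotent complete so that the suspension equivalence \eqref{bond} applies, and that the fiber sequence constructed above actually realizes Bass-style negative algebraic $\K$-theory of the non-unital ring $A$---it is here that one leans on the good excision properties of $C^*$-algebras and on the fact that the contributions from $\tilde{A}$ and $\CC$ assemble correctly into the relative spectrum.
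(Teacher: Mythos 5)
Your two-step outline---identify $\bK(\hpf(A))$ with the algebraic $\K$-theory of the nonunital ring $A$, then apply the Karoubi conjecture to the stable algebra $A\prot\cpt$---is in broad strokes the strategy of \cite{KQ}, which is all the paper itself offers here: it cites Theorem 3.7 of \cite{KQ} for the connective statement and upgrades to the nonconnective one by observing that $\pi_*(\bK(\hpf(-\prot\cpt)))$ is $C^*$-stable, half-exact, and homotopy invariant, hence Bott $2$-periodic. Your second step is fine. The gap is in the first step, at the sentence asserting that Waldhausen's construction and Schlichting's suspension ``therefore produce'' a fiber sequence $\bK(\hpf(A))\to\bK(\tilde{A})\to\bK(\CC)$. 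The localization theorem does not apply here for free: it requires the induced functor $\Perf(\tilde{A})/\hpf(A)\to\Perf(\CC)$ to be an equivalence up to factors, and for a general augmented algebra it is not. Concretely, take $A=xk[x]$, so $\tilde{A}=k[x]$ and the augmentation is evaluation at the origin. Then $\hpf(A)$ consists of perfect $k[x]$-complexes whose derived fibre at $0$ vanishes, i.e.\ perfect complexes supported on closed subsets of $\Aff^1\setminus\{0\}$; by d\'evissage $\K_0(\hpf(A))\cong\bigoplus_{p\neq 0}\ZZ$, whereas $\mathrm{fib}(\bK(k[x])\to\bK(k))$ is contractible by homotopy invariance over a regular base. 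So the sequence you write down is not a fibre sequence in general; what exists formally is only a comparison map from $\bK(\hpf(A))$ to the relative spectrum, and in this example it is wildly non-injective on $\K_0$.

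Proving that this comparison map is an equivalence for the relevant class of nonunital rings is precisely the content of \cite{KQ}, building on Quillen's analysis of $K_0'$ in \cite{QuiNonunitalK0} and on $H$-unitality/excision in the sense of Suslin--Wodzicki; it is a theorem about these particular ideals, not an instance of Schlichting's machinery. Your closing remark that one must ``lean on the good excision properties of $C^*$-algebras'' points in the right direction but conflates two different statements: Suslin--Wodzicki excision says that $\mathrm{fib}(\bK(\tilde{A})\to\bK(\CC))$ is independent of the chosen unital ring containing $A$ as an ideal, whereas what you need is the separate identification of that fibre with $\bK(\hpf(A))$. (A small terminological point: the functor $\Perf(\tilde{A})\to\Perf(\CC)$ you use is extension of scalars $-\otimes^{\mathbb{L}}_{\tilde{A}}\CC$, not restriction of scalars.) As written, the crux of the argument is asserted rather than proved, at exactly the point where the cited reference does the work.
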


\begin{rem}
 In \cite{KQ} the author phrased the results in terms of $\DGcorr$, which was called the category of noncommutative DG correspondences. The category $\DGcorr$ is equivalent to $\Hmo_0$. Moreover, in Theorem 3.7 of \cite{KQ} actually the connective version of Theorem \ref{NCK} was proven. The translation to the nonconnective version based on the above discusion (see subsection \ref{NegK}) is straightforward. Note that the functor $A\mapsto \pi_*(\bK(\hpf(A\prot\cpt)))$ is $C^*$-stable, half-exact, and homotopy invariant whence it is Bott $2$-periodic and the natural {\em comparison map} (see Section \ref{AlgVsTop}) gives rise to a natural transformation $\pi_*(\bK(\hpf(A\prot\cpt)))\map\KT_*(A)$ between two $2$-periodic homology theories on $\Csep$.  
\end{rem}

\noindent
A crucial insight of Rosenberg in \cite{RosCT} is that certain bundles of compact operators $\cpt$ on locally compact spaces can be used to model twisted $\K$-theory that was introduced in \cite{DonKar}. More precisely, given any pair $(E,h)$ with $E$ locally compact one can construct a noncommutative stable $C^*$-algebra $\CT(E,h)$, whose topological $\K$-theory is the twisted $\K$-theory of the pair $(E,h)$. This formalism extends to certain infinite dimensional spaces through the use of $\sC$-algebras \cite{MyTwist}. In \cite{BMRS,BMRS2} the authors extended the formalism of $\TT$-duality to $C^*$-algebras and showed that under favourable circumstances if $B$ and $B'$ are $\TT$-dual $C^*$-algebras, then there is an invertible element in $\KK_0(B,\Sigma B')$ that implements the twisted $\K$-theory isomorphism (as in \eqref{Twist}). The Connes--Skandalis picture of $\KK$-theory \cite{ConSka} and Rieffel's imprimitivity result \cite{IndRepRie} are pertinent to their construction. Thanks to Theorem \ref{Tdual} we conclude that if two stable $C^*$-algebras $B$ and $B'$ are $\TT$-dual, such that there is an invertible element $\alpha\in\KK_0(B,\Sigma B')$, then their noncommutative motives $\hpf(B)$ and $\hpf(B')$ are isomorphic in $\Hmo_0$. Furthermore, Theorem \ref{NCK} asserts that the invertible element $\alpha$ implements the twisted $\K$-theory isomorphism (like \eqref{Twist}) that is expected from $\TT$-duality.

Recall that the Cuntz algebra $\Oinf$ is the universal unital $C^*$-algebra generated by a set of isometries $\{s_i\,|\, i\in\NN\}$ with mutually orthogonal range projections $s_is_i^*$ \cite{CunO}. Observe that $\Oinf$ is a unital $C^*$-algebra, so that $\Oinf$-stabilization preserves unitality (unlike $\cpt$-stabilization). The following Lemma is crucial and it exploits the fact that $\Oinf$ is {\em purely infinite}.

\begin{lem} \label{Basic}
There is a commutative diagram in $\CAlg$ \beq \label{BasicCD}
\xymatrix{
\Oinf\ar[rr]^\iota\ar[rd]_\theta && \Oinf \\
&\Oinf\prot\cpt \ar[ur]_\kappa,
}
\eeq where the top horizontal arrow $\iota:\Oinf\map \Oinf$ is an inner endomorphism.
\end{lem}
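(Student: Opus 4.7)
The plan is to construct $\theta$ and $\kappa$ by hand, exploiting the defining isometries $\{s_i\}_{i\in\NN}$ of $\Oinf$ with their orthogonality relation $s_i^{*}s_j=\delta_{ij}\mathbf{1}$, and then to read off the composition as conjugation by the single isometry $s_1$. Let $\theta:\Oinf\map\Oinf\prot\cpt$ be the standard corner embedding $\theta(a):=a\otimes e_{11}$, where $\{e_{ij}\}$ are the matrix units of $\cpt$ for a fixed orthonormal basis of the underlying Hilbert space; this is manifestly a $*$-homomorphism. For $\kappa$ I would prescribe, on elementary tensors,
$$
\kappa(a\otimes e_{ij}):= s_i\, a\, s_j^{*},
$$
and extend bilinearly to the algebraic tensor product $\Oinf\odot\cpt$.

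The second step is to check that $\kappa$ really is a $*$-algebra homomorphism on $\Oinf\odot\cpt$. Multiplicativity is a one-line computation using orthogonality:
$$
\kappa(a\otimes e_{ij})\,\kappa(b\otimes e_{kl}) \;=\; s_i\, a\,(s_j^{*}s_k)\,b\, s_l^{*} \;=\; \delta_{jk}\, s_i\,ab\,s_l^{*},
$$
which agrees with $\kappa\bigl((a\otimes e_{ij})(b\otimes e_{kl})\bigr) = \kappa(\delta_{jk}\,ab\otimes e_{il})$; compatibility with the involution is just as direct. To promote $\kappa$ to a morphism between the $C^{*}$-algebras $\Oinf\prot\cpt$ and $\Oinf$, I would invoke the universal property of the maximal $C^{*}$-tensor product: the prescription $x\mapsto\|\kappa(x)\|_{\Oinf}$ is a finite $C^{*}$-seminorm on $\Oinf\odot\cpt$ and is therefore dominated by the maximal $C^{*}$-norm, so the algebraic map extends continuously and uniquely to the $\prot$-completion.

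The third step is immediate: one computes
$$
(\kappa\circ\theta)(a) \;=\; \kappa(a\otimes e_{11}) \;=\; s_1\, a\, s_1^{*}.
$$
Because $s_1^{*}s_1=\mathbf{1}$ (while $s_1 s_1^{*}\neq\mathbf{1}$), the assignment $\iota(a):=s_1 a s_1^{*}$ is an endomorphism of $\Oinf$ given by conjugation by the isometry $s_1$, i.e.~an inner endomorphism; this gives precisely the commutative diagram in \eqref{BasicCD}.

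The main conceptual hurdle is that $\kappa$ is \emph{not} built from a pair of commuting $*$-homomorphisms $\Oinf\map\Oinf$ and $\cpt\map\Oinf$ --- in fact the natural embedding $e_{ij}\mapsto s_i s_j^{*}$ of $\cpt$ into $\Oinf$ does not commute with $\Oinf$ at all. The clever point is that the isometries $s_i$ couple the two tensor factors inside a single formula, simultaneously implementing the matrix units and shifting the $\Oinf$-component into the corner cut out by $s_1 s_1^{*}$. This is why the same construction applies verbatim in any $C^{*}$-algebra that carries a sequence of isometries with pairwise orthogonal range projections, i.e.~any properly infinite unital $C^{*}$-algebra, which is the generalization indicated in the Remark preceding the lemma.
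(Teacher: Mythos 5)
Your proof is correct and takes essentially the same route as the paper: the corner embedding $\theta(a)=a\otimes e_{11}$, the map $\kappa(a\otimes e_{ij})=s_i a s_j^{*}$, and the observation that the composite is conjugation by the isometry $s_1$, hence inner. You merely spell out the routine verification that $\kappa$ is a well-defined $*$-homomorphism on the completed tensor product, which the paper leaves implicit.
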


\begin{proof}
Observe that the subset $\{s_is^*_j\,|\, i,j\in\NN\}\subset\Oinf$ generates a copy of the compact operators $\cpt$ inside $\Oinf$. Consider the $*$-homomorphism $\kappa:\Oinf\prot\cpt\map \Oinf$, which is defined as $a\otimes e_{ij}\mapsto s_i a s^*_j$. Due to the simplicity of all the $C^*$-algebras in sight, $\kappa$ is injective. Let $\theta:\Oinf\map \Oinf\prot\cpt$ be simply the corner embedding, sending $a\mapsto a\otimes e_{11}$. The composite $\iota = \kappa\theta$ is given by $\iota(a)= s_1 a s^*_1$. This $*$-homomorphism is manifestly inner. 
\end{proof}

\noindent
Recall that a functor $F:\Csep\functor\Hmo_0$ is called {\em split exact} if it sends a split exact sequence in $\Csep$ to a direct sum diagram in the additive category $\Hmo_0$. It follows from Lemma 3.1 of \cite{KQ} that the functor $\hpf(-)$ is split exact.

\begin{thm} \label{OMotive}
If $A$ and $A'$ are isomorphic in $\KKcat$, then the noncommutative motives of $A\prot\Oinf$ and $A'\prot\Oinf$ are isomorphic in $\Hmo_0$.
\end{thm}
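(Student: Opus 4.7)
The plan is to reduce to Theorem~\ref{Tdual} by absorbing a copy of $\cpt$ using Lemma~\ref{Basic}. Concretely, I would build the zig-zag of isomorphisms
\[
\hpf(A\prot\Oinf)\;\cong\;\hpf(A\prot\Oinf\prot\cpt)\;\cong\;\hpf(A'\prot\Oinf\prot\cpt)\;\cong\;\hpf(A'\prot\Oinf)
\]
in $\Hmo_0$. The middle isomorphism is immediate from Theorem~\ref{Tdual}: tensoring with $\Oinf$ preserves $\KK$-equivalences, so the hypothesis that $A$ and $A'$ are isomorphic in $\KKcat$ yields $A\prot\Oinf\cong A'\prot\Oinf$ in $\KKcat$, and after the further $\cpt$-stabilisation Theorem~\ref{Tdual} supplies the desired isomorphism in $\Hmo_0$.

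For the outer isomorphisms the new input is Lemma~\ref{Basic}. Tensoring the diagram~\eqref{BasicCD} on the left with $B\in\{A,A'\}$ produces $*$-homomorphisms $\theta_B:=\id_B\prot\theta$ and $\kappa_B:=\id_B\prot\kappa$ satisfying $\kappa_B\theta_B=\id_B\prot\iota$, the inner endomorphism of $B\prot\Oinf$ given by conjugation by the isometry $1\otimes s_1$. A direct computation shows that the reverse composite $\theta_B\kappa_B$ is likewise inner, namely conjugation in the multiplier algebra $M(B\prot\Oinf\prot\cpt)$ by the isometry $V=\sum_i s_i\otimes e_{1i}$. The claim is that $\hpf(\theta_B)$ and $\hpf(\kappa_B)$ are mutually inverse isomorphisms in $\Hmo_0$, which supplies the outer arrows.

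The main obstacle, on which the whole argument hinges, is to show that an inner endomorphism of a $C^*$-algebra by an isometry $v$ with $[vv^*]=[1]$ in $K_0$ induces the identity in $\Hmo_0$ via $\hpf$. Both of the inner endomorphisms appearing above satisfy this hypothesis, since $s_1^*s_1=1$ forces $s_1s_1^*$ to be Murray--von Neumann equivalent to $1$ in $\Oinf$, and the analogous computation holds for $V$. I expect the resolution to be a direct bimodule argument internal to the construction of $\hpf$: the $\tilde B$-bimodule representing the class $\hpf(\iota_B)\in\Hmo_0(\hpf(B),\hpf(B))=\K_0(\rep(-,-))$ differs from the identity bimodule by an explicit equivalence implemented by left multiplication by $v$, and this equivalence is recognised as a quasi-isomorphism on strictly perfect resolutions. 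An auxiliary route uses the split-exactness of $\hpf$ (Lemma~3.1 of~\cite{KQ}) together with the commutative square $(\iota_B\prot\id_\cpt)\theta_B=\theta_B\iota_B$, whose bottom map becomes the identity in $\Hmo_0$ by Theorem~\ref{Tdual}; combined with $\hpf(\kappa_B)\hpf(\theta_B)=\hpf(\iota_B)$ this rigidifies $\hpf(\theta_B),\hpf(\kappa_B)$ into mutual inverses. Once the obstacle is overcome, both composites become identities in $\Hmo_0$, the zig-zag closes, and Theorem~\ref{OMotive} follows.
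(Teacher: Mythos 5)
Your first half coincides with the paper's proof: the same zig-zag through $\hpf(B\prot\Oinf\prot\cpt)$, the middle isomorphism from Theorem~\ref{Tdual} applied to $\alpha\prot\id_{\Oinf}\prot\id_{\cpt}$, and the observation that $\kappa_B\theta_B=\Ad_{1\otimes s_1}$ is inner by an isometry in the unitization, so that matrix stability of $\hpf$ (Lemma~2.3 of \cite{KQ}) gives $\hpf(\kappa_B)\hpf(\theta_B)=\id$. The crux is the other composite, and there your argument breaks down. The claimed innerness of $\theta_B\kappa_B$ fails: your $V=\sum_i s_i\otimes e_{1i}$ is only a \emph{left} multiplier of $\Oinf\prot\cpt$, since $(1\otimes e_{k1})V=\sum_i s_i\otimes e_{ki}$ has non-Cauchy partial sums and $VV^*$ would have to be $(\sum_i s_is_i^*)\otimes e_{11}$, where $\sum_i s_is_i^*$ does not converge strictly in $\Oinf$. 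In fact no multiplier isometry $W$ can implement $\theta_B\kappa_B$: if it did, $\theta_B\kappa_B(u_\lambda)$ would converge strictly to $WW^*$ for an approximate unit $u_\lambda$, and it does not. Even setting this aside, the fact that $\hpf$ kills conjugation by strictly convergent isometries is precisely what is unavailable --- Proposition~3.16 of \cite{CunMeyRos} style arguments for multiplier isometries pass through homotopy invariance or full $C^*$-stability, and the algebraically defined $\hpf$ has neither (it is homotopy invariant only on stable $C^*$-algebras, which is why Theorem~\ref{Tdual} tensors with $\cpt$ in the first place). Your ``auxiliary route'' is circular: the square $(\iota_B\prot\id_\cpt)\theta_B=\theta_B\iota_B$ together with $\hpf(\iota_B)=\id$ only returns $\hpf(\theta_B)=\hpf(\theta_B)$, and knowing one composite is the identity never forces the other to be.

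The paper closes exactly this gap by a different mechanism: it produces a \emph{right} inverse for $\hpf(R)$ (your $\hpf(\theta_B)$) without ever analyzing $R\circ S$. The composite $\cpt\overset{i}{\hookrightarrow}\Oinf\overset{\theta}{\map}\Oinf\prot\cpt$ is a $\KK$-equivalence $\gamma$, so $\id_A\prot\gamma\prot\id_\cpt$ becomes an isomorphism in $\Hmo_0$ by Theorem~\ref{Tdual}; this isomorphism factors as $\hpf(R\prot\id_\cpt)\circ\hpf(I\prot\id_\cpt)$, giving $\hpf(R\prot\id_\cpt)$ a right inverse. One then transports this back to $\hpf(R)$ along the corner embeddings $A\prot\cpt\map A\prot\cpt\prot\cpt$ and $A\prot\Oinf\prot\cpt\map A\prot\Oinf\prot\cpt\prot\cpt$, which are corner embeddings of \emph{stable} $C^*$-algebras, hence homotopic to isomorphisms and sent to isomorphisms by $\hpf$. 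Together with the left inverse from the inner endomorphism $S\circ R$, this makes $\hpf(R)$ and $\hpf(R')$ invertible and the zig-zag closes. You should also record the reduction to unital $A$, $A'$ and the extension to the general case via split exactness of $\hpf$, which your proposal omits.
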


\begin{proof}
 Let us first assume that $A,A'$ are unital and let $\alpha\in\KK_0(A,A')$ be any invertible element. Consider the commutative diagram that is obtained by applying $A\prot-$ to the commutative diagram \ref{BasicCD} \beq \label{RS}
\xymatrix{
A\prot\Oinf\ar[rr]^{\id_A\prot\iota}\ar[rd]_{R:=\id_A\prot \theta} && A\prot\Oinf\\
& A\prot\Oinf\prot\cpt.\ar[ru]_{S:=\id_A\prot \kappa}}
\eeq Now from Theorem \ref{Tdual} one obtains a diagram in $\Hmo_0$ \beq \label{Motive}
\xymatrix{
\hpf(A\prot\Oinf)\ar[r]^{\hpf(R)} & \hpf(A\prot\Oinf\prot\cpt) \ar[r]^{\hpf(S)}\ar[d]^{\beta=\hpf(\alpha\prot\id_{\Oinf}\prot\id_\cpt)} & \hpf(A\prot\Oinf) \\
\hpf(A'\prot\Oinf)\ar[r]^{\hpf(R')} & \hpf(A'\prot\Oinf\prot\cpt) \ar[r]^{\hpf(S')} & \hpf (A'\prot\Oinf). 
}
\eeq where $R'$ and $S'$ are defined in the obvious manner (replace $A$ by $A'$ in diagram \ref{RS}). Since $\alpha$ is invertible, so are $\alpha\prot\id_{\Oinf}$ and $\alpha\prot\id_{\Oinf}\prot\id_{\cpt}$. Therefore, the middle vertical arrow $\beta$ is an isomorphism. Observe that $S\circ R$ is an inner endomorphism in $\Csep$ of the form $x\mapsto (\one_A\otimes s_1)x(\one_A\otimes s_1)^*$ (and so is $S'\circ R'$ similarly). It is known that if $F$ is a matrix stable functor on $\CAlg$ (resp. $\Csep$) and $f$ is an inner endomorphism in $\CAlg$ (resp. $\Csep$), then $F(f)$ is the identity map (see, for instance, Proposition 3.16. of \cite{CunMeyRos}). It was shown in Lemma 2.3 of \cite{KQ} that the functor $\hpf(-)$ is matrix stable on $\Csep$, whence we get $$\hpf(S)\circ\hpf(R)=\id_{\hpf(A\prot\Oinf)} \quad\text{ and }\quad \hpf(S')\circ\hpf(R')=\id_{\hpf(A'\prot\Oinf)}.$$

\noindent
Thus the maps $\hpf(R)$ and $\hpf(R')$ possess left inverses. An inspection of diagram \ref{Motive} reveals that it suffices to show that they also possess right inverses. The composite $*$-homomorphism $\cpt\overset{i}{\hookrightarrow}\Oinf\overset{\theta}{\map}\Oinf\prot\cpt$ defines an invertible element $\theta\circ i=\gamma\in\KK_0(\cpt,\Oinf\prot\cpt)$. Consequently, $\id_A\prot\gamma\in\KK_0(A\prot\cpt, A\prot\Oinf\prot\cpt)$ is an invertible element. By Theorem \ref{Tdual} $\id_A\prot\gamma\prot\id_\cpt=(\id_A\prot\theta\prot\id_\cpt)\circ(\id_A\prot i\prot\id_\cpt)$ induces an isomorphism $$\hpf(A\prot\cpt\prot\cpt)\overset{\sim}{\map}\hpf(A\prot\Oinf\prot\cpt\prot\cpt).$$ Let us set $I=\id_A\prot i$, so that $\hpf(R\prot\id_\cpt)\circ\hpf(I\prot\id_\cpt)$ is the above isomorphism. Now consider the following commutative diagram \beqn
\xymatrix{
A\prot\cpt \ar[r] \ar[d]^I & A\prot\cpt\prot\cpt \ar[d]^{I\prot\id_\cpt}\\
A\prot\Oinf \ar[r] \ar[d]^R & A\prot\Oinf\prot\cpt \ar[d]^{R\prot\id_\cpt}\\
A\prot\Oinf\prot\cpt \ar[r] & A\prot\Oinf\prot\cpt\prot\cpt.
}
\eeqn Here all the horizontal arrows are corner embeddings. Now the top and the bottom horizontal arrows are homotopic to isomorphisms. Since $\hpf(-)$ is homotopy invariant on stable $C^*$-algebras, it sends the top and the bottom horizontal arrows to isomorphisms. We already know that it sends $(R\prot\id_\cpt)\circ (I\prot\id_\cpt)$ to an isomorphism. It follows that $\hpf(R)$ has a right inverse. Similarly, one can prove that $\hpf(R')$ has a right inverse. Now using split exactness of $\hpf(-)$ one can extend the result to nonunital $C^*$-algebras.
\end{proof}

\begin{cor} \label{KKfac}
 The functor $\hpf(-\prot\Oinf)$ is $C^*$-stable and it factors through $\KKcat$.
\end{cor}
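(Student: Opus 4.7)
The plan is to invoke the Higson--Cuntz universal characterization of $\iota:\Csep\functor\KKcat$ (cf. \cite{Hig1, CunKK}): any functor from $\Csep$ to an additive category that is $C^*$-stable, split exact, and homotopy invariant factors uniquely through $\iota$. Since $\Hmo_0$ is additive, this reduces the problem to verifying these three properties for $\hpf(-\prot\Oinf)$; $C^*$-stability will be part of what was verified, so both claims of the corollary come out simultaneously.

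Split exactness is almost free: a split exact sequence in $\Csep$ remains split after applying $-\prot\Oinf$ (the splitting is preserved by any functor), and $\hpf$ itself is split exact by Lemma 3.1 of \cite{KQ}. For $C^*$-stability, I would simply read off the proof of Theorem \ref{OMotive}, where the corner embedding $R = \id_A\prot\theta:A\prot\Oinf\hookrightarrow A\prot\Oinf\prot\cpt$ was shown, using the left- and right-inverse construction based on Lemma \ref{Basic}, to be sent by $\hpf$ to an isomorphism in $\Hmo_0$; the argument works for any $A\in\Csep$.

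The delicate step will be homotopy invariance, since \cite{KQ} only establishes homotopy invariance of $\hpf$ on $\cpt$-stable $C^*$-algebras. Given homotopic $*$-homomorphisms $f,g:A\map B$, my strategy would be to pass to the $\cpt$-stabilization: the maps $f\prot\id_{\Oinf\prot\cpt}$ and $g\prot\id_{\Oinf\prot\cpt}$ are still homotopic, and their source and target are genuinely $\cpt$-stable, so $\hpf$ identifies them in $\Hmo_0$. I would then transport this equality back along the natural isomorphism $\hpf(A\prot\Oinf)\cong\hpf(A\prot\Oinf\prot\cpt)$ (and similarly for $B$) furnished by $\hpf(R)$ in the proof of Theorem \ref{OMotive}. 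The key observation making the transport legitimate is that this isomorphism, being $\hpf(\id_A\prot\theta)$, is manifestly natural with respect to $*$-homomorphisms in the $A$-variable. This is the main obstacle and precisely the point at which the $\Oinf$-specific input of Lemma \ref{Basic} does essential work beyond what Theorem \ref{OMotive} provides on objects. With all three properties in hand, the universal characterization supplies the desired factorization $\hpf(-\prot\Oinf)\cong G\circ\iota$ for a unique $G:\KKcat\map\Hmo_0$, completing the argument.
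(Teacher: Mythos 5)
Your proof is correct and rests on the same pillar as the paper's: the universal characterization of $\KKcat$ applied to the additive target $\Hmo_0$, with $C^*$-stability extracted from (the proof of) Theorem \ref{OMotive} and split exactness from Lemma 3.1 of \cite{KQ}. The one genuine divergence is your treatment of homotopy invariance. The paper does not verify it at all: in the Higson form of the universal property cited there (\cite{Hig1,CunKK}), a split exact and $C^*$-stable functor on $\Csep$ with values in an additive category is \emph{automatically} homotopy invariant and factors through $\KKcat$, so stability plus split exactness already suffice. Your direct argument --- stabilize by $\cpt$, invoke homotopy invariance of $\hpf$ on stable $C^*$-algebras from \cite{KQ}, and transport back along $\hpf(R)=\hpf(\id_A\prot\theta)$ --- is nevertheless sound: $R$ is natural in the $A$-variable, and its invertibility in $\Hmo_0$ is exactly what the proof of Theorem \ref{OMotive} establishes. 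What your route buys is independence from the theorem that split exactness and stability imply homotopy invariance, i.e., it works even with the weaker Cuntz-style universal property that lists homotopy invariance among the hypotheses; what it costs is an extra step that the paper's formulation renders unnecessary.
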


\begin{proof}
 For any separable $C^*$-algebra $A$ the corner embedding $A\map A\prot\cpt$ is $\KKcat$-invertible whence $\hpf(-\prot\Oinf)$ is $C^*$-stable. It follows from Lemma 3.1 of \cite{KQ} that the functor $\hpf(-\prot\Oinf)$ is split exact. The second assertion now is a consequence of the universal characterization of $\KKcat$.
\end{proof}

\noindent
Now we prove the $\Oinf$-analogue of Theorem \ref{NCK}.

\begin{thm} \label{OK}
 For any $A\in\Csep$ the homotopy groups of the nonconnective $\K$-theory spectrum of $\hpf(A\prot\Oinf)$ are naturally isomorphic to the topological $\K$-theory groups of $A$.
\end{thm}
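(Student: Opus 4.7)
My plan is to reduce the statement to Theorem \ref{NCK} applied to the separable $C^*$-algebra $A\prot\Oinf$, by means of two auxiliary natural isomorphisms. The target will be built as the composite
\begin{equation*}
\pi_*\bK(\hpf(A\prot\Oinf))\;\cong\;\pi_*\bK(\hpf((A\prot\Oinf)\prot\cpt))\;\cong\;\KT_*(A\prot\Oinf)\;\cong\;\KT_*(A).
\end{equation*}

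The first isomorphism comes from running the proof of Theorem \ref{OMotive} verbatim. Tensoring the diagram of Lemma \ref{Basic} with $A$ produces maps $R_A=\id_A\prot\theta$ and $S_A=\id_A\prot\kappa$ with $S_A\circ R_A$ an inner endomorphism; by matrix stability of $\hpf(-)$ the morphism $\hpf(R_A)\colon\hpf(A\prot\Oinf)\to\hpf(A\prot\Oinf\prot\cpt)$ admits $\hpf(S_A)$ as a left inverse, and the corner-embedding/homotopy-invariance diagram used at the end of the proof of Theorem \ref{OMotive} supplies a right inverse. Hence $\hpf(R_A)$ is an isomorphism in $\Hmo_0$, manifestly natural in $A$, and so is the induced isomorphism on nonconnective $\K$-theory.

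The second isomorphism is Theorem \ref{NCK} itself, applied to the separable $C^*$-algebra $A\prot\Oinf$. For the third, I use that the unit inclusion $\eta\colon\CC\to\Oinf$ is a $\KK$-equivalence: $\Oinf$ lies in the UCT class, $\K_1(\Oinf)=0$, and $[1_{\Oinf}]$ generates $\K_0(\Oinf)\cong\ZZ$, so $\eta_*$ is an isomorphism on $\K$-theory. Because $\Oinf$ is nuclear, the functor $-\prot\Oinf$ coincides with the minimal tensor product and preserves $\KK$-equivalences, so $\id_A\prot\eta\colon A\to A\prot\Oinf$ is a $\KK$-equivalence for every $A\in\Csep$. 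Since $\KT_*(-)$ descends to $\KKcat$, this yields the desired natural isomorphism $\KT_*(A\prot\Oinf)\cong\KT_*(A)$. Composing the three natural isomorphisms establishes the claim.

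The main technical point I expect is in the third step: verifying that $\id_A\prot\eta$ remains a $\KK$-equivalence for \emph{arbitrary} separable $A$. This rests on the nuclearity of $\Oinf$, which ensures the external product in $\KKcat$ is well-behaved with respect to $-\prot\Oinf$. Should this be challenged, an alternative route is available: by Corollary \ref{KKfac} the functor $A\mapsto \pi_n(\bK(\hpf(A\prot\Oinf)))$ is $C^*$-stable, split-exact, and homotopy invariant, hence a $2$-periodic homology theory on $\Csep$; a natural comparison map to $\KT_n(-)$ is supplied as in the remark following Theorem \ref{NCK}, and one verifies agreement on $A=\CC$ using the Karoubi-type identification of algebraic and topological $\K$-theory for $\Oinf$-stable $C^*$-algebras from \cite{CorPhi}, concluding by uniqueness of $2$-periodic homology theories on $\Csep$.
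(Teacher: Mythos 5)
Your proposal is correct and follows essentially the same route as the paper: the paper likewise factors the isomorphism as $\pi_*\bK(\hpf(A\prot\Oinf))\cong\pi_*\bK(\hpf(A\prot\Oinf\prot\cpt))\cong\KT_*(A\prot\Oinf)\cong\KT_*(A)$, citing Corollary \ref{KKfac} for the first step (whose proof is exactly the Lemma \ref{Basic}/matrix-stability argument you inline) and Theorem \ref{NCK} for the second. Your extra justification of the third step via the $\KK$-equivalence $\CC\to\Oinf$ is the standard fact the paper leaves implicit, so no substantive difference remains.
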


\begin{proof}
 By the above Corollary the nonconnective $\K$-theory spectra of $\hpf(A\prot\Oinf)$ and $\hpf(A\prot\Oinf\prot\cpt)$ are weakly equivalent. By Theorem \ref{NCK} the homotopy groups of the nonconnective $\K$-theory spectrum of $\hpf(A\prot\Oinf\prot\cpt)$ are isomorphic to the topological $\K$-theory groups of $A\prot\Oinf$, which are in turn isomorphic to those of $A$.
\end{proof}

\begin{rem}[Categorification of topological $\TT$-duality]
It is shown in Example 5.6 of \cite{BMRS2} that if two pairs $(E_1,h_1)$ and $(E_2,h_2)$ over $B$ with are $\TT$-dual, then one can construct an invertible element in $\KK_1(\CT(E_1,h_1),\CT(E_2,h_2))$. It follows that $\CT(E_1,h_1)$ and $\Sigma\CT(E_2,h_2)$ are isomorphic in $\KKcat$ whence by Theorem \ref{OMotive} $\hpf(\CT(E_1,h_1)\prot\Oinf)$ and $\hpf(\Sigma\CT(E_2,h_2)\prot\Oinf)$ are isomorphic in $\Hmo_0$. Now using Theorem \ref{OK} one concludes $$\K_*(\hpf(\CT(E_i,h_i)\prot\Oinf))\cong \KT_*(\CT(E_i,h_i))\cong(\KT)^*(E_i,h_i)$$ for $i=1,2$ and hence one obtains the isomorphism \eqref{Twist} between twisted $\K$-theories (up to a shift) under $\TT$-duality. Since noncommutative motives constitute the universal additive invariant \cite{Tab3}, an isomorphism therein is the most fundamental (co)homological isomorphism. 
\end{rem}

\begin{rem}
 The introduction of $\Oinf$ in this setting is quite interesting because of the Dixmier--Douady theory via $\Oinf\prot\cpt$-bundles due to Dadarlat--Pennig \cite{DadPen} that is able to see {\em higher twists} of $\KT$-theory. This avenue of research deserves further attention.
\end{rem}

\section{The generalized homology theory $\bK(-\prot\Oinf)$} \label{AlgVsTop}
 Let $\hSp$ denote the triangulated stable homotopy category. A functor $F:\CAlg\functor\hSp$ is called {\em homotopy invariant} if it sends the evaluation at $t$ map $\ev_t:A[0,1]\map A$ to an isomorphism in $\hSp$ for all $A\in\CAlg$. Such a functor is called {\em excisive} if for any short exact sequence $0\map A\map B\map C\map 0$ in $\CAlg$ the induced diagram $F(A)\map F(B)\map F(C)\map \Sigma F(A)$ is an exact triangle in $\hSp$. A homotopy invariant excisive functor $F:\CAlg\functor\hSp$ is called an $\hSp$-valued {\em generalized homology theory} on $\CAlg$. It is known that the algebraic $\K$-theory functor $\bK(-)$ acquires special properties after stabilization with respect to the compact operators. We are going to show that the same is true after $\Oinf$-stabilization. 

\begin{prop} \label{excision}
The functor $\bK(-\prot\Oinf):\CAlg\functor\hSp$ is an excisive functor.
\end{prop}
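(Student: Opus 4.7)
The strategy is to reduce excision for $\bK(-\prot\Oinf)$ to the known excision properties of topological $K$-theory on $\CAlg$, by leveraging that $\Oinf$-stabilization produces algebras on which the comparison map from algebraic to topological $K$-theory is a natural weak equivalence of spectra. Given an arbitrary short exact sequence $0\to A\to B\to C\to 0$ in $\CAlg$, my first step is to check that tensoring with $\Oinf$ preserves short exactness, yielding
\begin{equation*}
0\to A\prot\Oinf\to B\prot\Oinf\to C\prot\Oinf\to 0.
\end{equation*}
This uses the fact that the Cuntz algebra $\Oinf$ is nuclear, so that $-\prot\Oinf$ coincides with the minimal tensor product $-\otimes_{\min}\Oinf$, and the minimal tensor product with a nuclear (in particular, exact) $C^*$-algebra is an exact functor on $\CAlg$.

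Next, since $\Oinf$ is strongly self-absorbing, each of the three $C^*$-algebras $A\prot\Oinf$, $B\prot\Oinf$, $C\prot\Oinf$ is $\Oinf$-stable. I then invoke the comparison theorem of Corti\~nas--Phillips \cite{CorPhi} (see also \cite{KarWod}), alluded to already in the introduction, which asserts that the natural comparison map $\bK(X)\to\bK^{\textup{top}}(X)$ is a weak equivalence of spectra for every $\Oinf$-stable $C^*$-algebra $X$. Combined with the naturality of this transformation, this supplies a natural isomorphism
\begin{equation*}
\bK(-\prot\Oinf)\;\xrightarrow{\;\sim\;}\;\bK^{\textup{top}}(-\prot\Oinf)
\end{equation*}
of functors $\CAlg\to\hSp$.

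Finally, topological $K$-theory $\bK^{\textup{top}}(-)$ is classically excisive on $\CAlg$: every short exact sequence of $C^*$-algebras induces a six-term exact sequence in $\KT_*$, which at the spectrum level translates into an exact triangle in $\hSp$. Applying this to the short exact sequence obtained after $\Oinf$-stabilization, and transporting the resulting exact triangle back along the natural weak equivalence above, we obtain an exact triangle
\begin{equation*}
\bK(A\prot\Oinf)\to\bK(B\prot\Oinf)\to\bK(C\prot\Oinf)\to\Sigma\bK(A\prot\Oinf)
\end{equation*}
in $\hSp$, which is precisely the excisiveness required.

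\textbf{Main obstacle.} The only nontrivial input is the Corti\~nas--Phillips comparison, whose proof is quite deep; however, it is an external theorem we may freely cite. A minor technical point is to verify that the comparison map is promoted from a $\pi_*$-level statement to an honest natural transformation of spectra (so that its pointwise being a $\pi_*$-isomorphism suffices to conclude a weak equivalence), but this follows from the standard spectrum-level models for both algebraic and topological $K$-theory. A small sanity check also needs to be made that $-\prot\Oinf$ really is exact for the \emph{maximal} tensor product, but nuclearity of $\Oinf$ collapses this into the minimal case where exactness is classical.
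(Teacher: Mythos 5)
Your proof is correct, but it takes a genuinely different route from the paper's. The paper's argument is a two-liner entirely internal to algebraic $\K$-theory: by the Suslin--Wodzicki theorem the functor $\bK$ is already excisive on all of $\CAlg$, and since the maximal $C^*$-tensor product preserves short exact sequences, $-\prot\Oinf$ preserves exactness, so $\bK(-\prot\Oinf)$ inherits excision. Note in particular that $\Oinf$ plays no special role there --- the same argument shows $\bK(-\prot D)$ is excisive for \emph{any} $C^*$-algebra $D$. You instead route through topological $\K$-theory via the Corti\~nas--Phillips comparison, which is a heavier and more specific input; it works, but two caveats are worth flagging. First, there is a latent circularity hazard: the paper's own Proposition \ref{CorPhi} uses Proposition \ref{excision} to handle nonunital algebras, so your argument is only non-circular because you invoke the \emph{external} Corti\~nas--Phillips theorem in its full (nonunital) generality rather than the paper's version --- you do this, but it should be said explicitly. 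Second, your ``sanity check'' about exactness has the logic slightly backwards: it is the \emph{minimal} tensor product whose exactness is delicate (whence the notion of an exact $C^*$-algebra), while the maximal tensor product preserves short exact sequences for arbitrary $D$; your conclusion survives because $\Oinf$ is nuclear so the two tensor products agree, but the appeal to nuclearity is an unnecessary detour. What your approach buys is a conceptual explanation (excision holds because the functor is ``really'' topological $\K$-theory), at the cost of depending on a deep comparison theorem where the paper needs only Suslin--Wodzicki plus exactness of $\otimes_{\max}$.
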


\begin{proof}
It follows from the Suslin--Wodzicki Theorem \cite{SusWod1,SusWod2} that the functor $\bK$ is excisive on $\CAlg$. Since maximal $C^*$-tensor product is exact, the functor $-\prot\Oinf$ preserves exactness in $\CAlg$ whence $\bK(-\prot\Oinf)$ is excisive.
\end{proof}

Thanks to the Karoubi conjecture, which is now a Theorem \cite{SusWod1,SusWod2}, we know that the nonconnective algebraic $\K$-theory of a stable $C^*$-algebra is isomorphic to its topological $\K$-theory. In fact, there is a canonical comparison map of spectra that induces the isomorphisms $c_n(A):\K_n(A)\map\KT_n(A)$ for all $n\in\ZZ$ when $A$ is stable \cite{KaroubiConj} (see also \cite{RosComparison}). The comparison map $c_0(A):\K_0(A)\map\KT_0(A)$ is always an isomorphism.

\begin{prop}[Corti{\~n}as--Phillips, Karoubi--Wodzicki] \label{CorPhi}
For any $C^*$-algebra $A$ the comparison map $c_n(A\prot\Oinf):\K_n(A\prot\Oinf)\map\KT_n(A\prot\Oinf)$ is an isomorphism for all $n\in\ZZ$.
\end{prop}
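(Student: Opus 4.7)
The plan is to exploit the factorization of Lemma \ref{Basic} together with the Karoubi conjecture for $\cpt$-stable $C^*$-algebras. First I would apply $A\prot -$ to the commutative triangle of Lemma \ref{Basic} to obtain maps $R:A\prot\Oinf\to A\prot\Oinf\prot\cpt$ (a corner embedding) and $S:A\prot\Oinf\prot\cpt\to A\prot\Oinf$ in $\CAlg$ whose composite $S\circ R=\id_A\prot\iota$ is an inner endomorphism of $A\prot\Oinf$. Both the nonconnective algebraic $\K$-theory functor $\K_n$ and the topological $\K$-theory functor $\KT_n$ are matrix-stable on $\CAlg$, and every matrix-stable functor sends inner endomorphisms to the identity (Proposition 3.16 of \cite{CunMeyRos}). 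Consequently $\K_n(S)\circ\K_n(R)=\id$ on $\K_n(A\prot\Oinf)$, so $\K_n(A\prot\Oinf)$ is a direct summand of $\K_n(A\prot\Oinf\prot\cpt)$, and the analogous statement holds for $\KT_n$.

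Next I would invoke the Suslin--Wodzicki theorem to conclude that the comparison map $c_n(A\prot\Oinf\prot\cpt)$ is an isomorphism for all $n\in\ZZ$, since $A\prot\Oinf\prot\cpt$ is a stable $C^*$-algebra. Combining these inputs with the naturality of the comparison map produces a commutative ladder whose middle vertical arrow is an isomorphism, while the horizontal composites on each rail are identity maps. A routine diagram chase then forces the outer vertical arrow $c_n(A\prot\Oinf)$ to be injective (if $c_n(A\prot\Oinf)(x)=0$, transport across by $\K_n(R)$ and $\KT_n(R)$, apply the middle inverse, and retract by $\K_n(S)$ to deduce $x=0$) and surjective (lift $\KT_n(R)(y)$ through the middle iso and push down by $\K_n(S)$ to produce a preimage of $y$).

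The only potential obstacle is ensuring that matrix stability of $\K_n$ genuinely holds for every integer $n$ (including negative degrees, supplied by the Bass--Karoubi construction) and that the comparison map is natural with respect to all $*$-homomorphisms involved—rather than only those between stable algebras. Both facts are now standard (see \cite{KaroubiConj,RosComparison}), so the argument reduces to the diagram chase above. As the Remark in the introduction indicates, the very same proof will succeed whenever $\Oinf$ is replaced by any $C^*$-algebra $B$ admitting the corner-retract factorization of Lemma \ref{Basic} through $B\prot\cpt$ with inner composite, which in particular includes all properly infinite $B$.
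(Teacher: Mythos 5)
Your proposal follows essentially the same route as the paper's own proof: apply $A\prot-$ to Lemma \ref{Basic}, use matrix stability to make the horizontal composites identities, invoke the Karoubi conjecture for the stable algebra $A\prot\Oinf\prot\cpt$ in the middle, and finish with the naturality diagram chase. The only (minor) discrepancy is that the paper first reduces to unital $A$ --- since the inner-endomorphism/matrix-stability argument for algebraic $\K$-theory is invoked on unital $C^*$-algebras --- and then handles nonunital $A$ by excision via Proposition \ref{excision}, whereas you assert matrix stability of $\K_n$ on all of $\CAlg$; you should either make that reduction explicit or justify the nonunital case separately.
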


\begin{proof}
Let us first assume that $A$ is a unital $C^*$-algebra. After applying $A\prot-$ to the commutative diagram \ref{BasicCD} in Lemma \ref{Basic} we obtain \beq
\xymatrix{
A\prot\Oinf\ar[rr]^{\id\prot\iota}\ar[rd]_{R:=\id\prot \theta} && A\prot\Oinf\\
& A\prot\Oinf\prot\cpt, \ar[ru]_{S:=\id\prot \kappa}}
\eeq where the top horizontal arrow is an inner endomorphism. Now applying the functors $\K_n(-)$, $\KT_n(-)$ and using the naturality of $c_n$, we get a commutative diagram \beq
\xymatrix{\K_n(A\prot\Oinf)\ar[rr]^{\K_n(R)}\ar[d]^{c_n(A\prot\Oinf)} && \K_n(A\prot\Oinf\prot\cpt)\ar[rr]^{\K_n(S)}\ar[d]^{c_n(A\prot\Oinf\prot\cpt)} && \K_n(A\prot\Oinf)\ar[d]^{c_n(A\prot\Oinf)} \\
\KT_n(A\prot\Oinf)\ar[rr]^{\KT_n(R)} && \KT_n(A\prot\Oinf\prot\cpt)\ar[rr]^{\KT_n(S)} && \KT_n(A\prot\Oinf).
}
\eeq Since $S\circ R$ is the inner endomorphism $\id\prot\iota: A\prot\Oinf\map A\prot\Oinf$, we conclude that $\K_n(S)\circ\K_n(R)$ is the identity map due to the matrix stability of algebraic $\K$-theory on the category of unital $C^*$-algebras. Moreover, $\KT_n(S)\circ\KT_n(R)$ is also the identity map due to the matrix stability of $\KT_n(-)$. The assertion for unital $A$ now follows by a simple diagram chase. Indeed, it is easily seen that $\K_n(R)$ must be injective and $\KT_n(S)$ must be surjective. Since $A\prot\Oinf\prot\cpt$ is stable, we conclude that $c_n(A\prot\Oinf\prot\cpt)$ is an isomorphism. Thus $c_n(A\prot\Oinf\prot\cpt)\circ\K_n(R)$ is injective whence so is $c_n(A\prot\Oinf)$ (the left vertical one). Similarly, $\KT_n(S)\circ c_n(A\prot\Oinf\prot\cpt)$ is surjective whence so is $c_n(A\prot\Oinf)$ (the right vertical one).

\noindent
The proof for nonunital $A$ follows by a simple excision argument (see Proposition \ref{excision}). 
\end{proof}

\begin{rem}
The above Proposition also follows from a result of Corti{\~n}as--Phillips. They proved that the comparison map $c_n(A):\K_n(A)\map\KT_n(A)$ is an isomorphism in a more general setting \cite{CorPhi}. Our argument above is based on a strategy of Karoubi--Wodzicki \cite{KarWod} with some simplifications that exploit the special properties of $\Oinf$. We have decided to include our simple proof as it involves only elementary (homological) algebra and hence it is (hopefully) comprehensible to non-experts on $C^*$-algebras. 
\end{rem}

Recall from \cite{JoaKHom} (see also Section 8.3 of \cite{CunMeyRos}) that for any $C^*$-algebra $A$ there is a functorial spectrum $\bK^\textup{top}(A)$, whose homotopy groups are the topological $\K$-theory groups of $A$.

\begin{thm} \label{GenHom}
For every $A\in\CAlg$ there is a natural isomorphism $\bK(A\prot\Oinf)\cong\bK^{\textup{top}}(A)$ in $\hSp$, i.e., the functor $\bK(-\prot\Oinf):\CAlg\functor\hSp$ is a model for topological $\K$-theory.
\end{thm}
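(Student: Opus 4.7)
The plan is to produce, naturally in $A\in\CAlg$, a zig-zag of isomorphisms in $\hSp$
\[
\bK(A\prot\Oinf) \xrightarrow{\ c(A\prot\Oinf)\ } \bK^{\textup{top}}(A\prot\Oinf) \xleftarrow{\bK^{\textup{top}}(\iota_A)} \bK^{\textup{top}}(A),
\]
where $c$ is the canonical comparison transformation of spectra from algebraic to topological $\K$-theory recalled just before Proposition \ref{CorPhi}, and $\iota_A\colon A\map A\prot\Oinf$ is the canonical $*$-homomorphism $a\mapsto a\otimes\one_{\Oinf}$ (interpreted via unitization when $A$ is nonunital). Inverting the right-hand arrow in $\hSp$ will then yield the desired natural isomorphism $\bK(A\prot\Oinf)\cong\bK^{\textup{top}}(A)$.

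First I would invoke Proposition \ref{CorPhi}: it asserts precisely that $c(A\prot\Oinf)$ induces isomorphisms on every homotopy group, so $c(A\prot\Oinf)$ is a stable equivalence of spectra, i.e., an isomorphism in the triangulated category $\hSp$. Naturality in $A$ is built into the naturality of the comparison map.

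Next I would argue that $\bK^{\textup{top}}(\iota_A)$ is also an isomorphism in $\hSp$. The key input is Cuntz's classical theorem that the unital inclusion $\CC\hookrightarrow\Oinf$ is a $\KK$-equivalence. For separable $A$, the external Kasparov product with $\id_A$ then shows that $\iota_A$ itself is a $\KK$-equivalence, and since Joachim's functor $\bK^{\textup{top}}$ factors through $\KKcat$ on $\Csep$, it sends $\iota_A$ to a weak equivalence of spectra. For arbitrary $A\in\CAlg$ I would reduce to the separable case by writing $A=\varinjlim_i A_i$ as a filtered colimit of its separable $C^*$-subalgebras and using that both $-\prot\Oinf$ (with $\Oinf$ fixed) and $\bK^{\textup{top}}(-)$ preserve such $C^*$-inductive limits.

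The main obstacle will be the careful handling of the nonseparable case, since the $\KK$-theoretic factorization of $\bK^{\textup{top}}$ is standardly formulated only on $\Csep$; the reduction via a filtered colimit argument is essentially routine but requires one to pin down the continuity of Joachim's construction. Once this reduction is justified, the whole proof is just a combination of Proposition \ref{CorPhi}, the $\KK$-equivalence $\CC\simeq\Oinf$, and the naturality of $c$ and of $\iota$.
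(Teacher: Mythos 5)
Your proof follows exactly the same route as the paper: the zigzag $\bK(A\prot\Oinf)\xrightarrow{\sim}\bK^{\textup{top}}(A\prot\Oinf)\xleftarrow{\sim}\bK^{\textup{top}}(A)$, with the left arrow handled by Proposition \ref{CorPhi} and the right arrow by the fact that $a\mapsto a\otimes\one_{\Oinf}$ is a $\KK$-equivalence. Your additional care about reducing the nonseparable case to the separable one via filtered colimits addresses a point the paper's one-line proof silently glosses over, and is a worthwhile (if routine) supplement rather than a different approach.
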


\begin{proof}
It follows from Proposition \ref{CorPhi} that the natural comparison map of spectra is a weak equivalence. Since the canonical $*$-homomorphism $A\map A\prot\Oinf$ sending $a\mapsto a\otimes \one_{\Oinf}$ is a $\KK$-equivalence, we have a zigzag of weak equivalences of spectra $$\bK(A\prot\Oinf)\overset{\sim}{\map}\bK^\textup{top}(A\prot\Oinf)\overset{\sim}{\leftarrow}\bK^\textup{top}(A).$$ Thus for every $A\in\CAlg$ there is a natural isomorphism $\bK(A\prot\Oinf)\cong\bK^{\textup{top}}(A)$ in $\h\Sp$.
\end{proof}

\begin{rem} \label{HSS}
It is interesting to associate with any $C^*$-algebra a symmetric spectrum, whose homotopy groups are the topological $\K$-theory groups. One strategy that circumvents the intricacies involved in constructing symmetric spectra of topological $\K$-theory directly is the following: for any unital $C^*$-algebra $A$ construct the (connective) Waldhausen $\K$-theory \cite{Waldhausen} of $A\prot\Oinf$; it produces naturally a symmetric spectrum \cite{GeiHes} and thanks to Theorem \ref{GenHom} it is a model for the (connective) topological $\K$-theory of $A$. The Green--Julg--Rosenberg Theorem also enables us to treat the $G$-equivariant case for a finite group $G$. Indeed, one can simply apply the (connective) Waldhausen $\K$-theory functor to $(A\rtimes G)\prot \Oinf$ for any unital $G$-$C^*$-algebra $A$ and use the natural identifications $$\K_*((A\rtimes G)\prot\Oinf)\cong\KT_*((A\rtimes G)\prot\Oinf)\cong\KT_*(A\rtimes G)\cong(\KT)^G_*(A).$$ Using Theorem \ref{OK} we can handle the situation if $A$ is nonunital or $G$ is not finite (but compact) or both. This construction would radically differ from that of \cite{JoaKHom}.
\end{rem}

\section{Strongly self-absorbing operads} \label{SSOperad}
Operadic structures have pervaded many areas of mathematics and physics with wide ranging applications. From the viewpoint of topology the operadic machinery can be effectively used to recognise (infinite) loop spaces. An {\em operad} in a symmetric monoidal category $(\cC,\otimes,\one_\cC)$ consists of a collection of objects $\{C(j)\}_{j\geqslant 0}$ with each $C(j)$ carrying a right action of the permutation group $\Sigma_j$, a unit map $\eta:\one_\cC\map C(1)$, and product or composition maps $$\gamma=\gamma_{j_1,\cdots,j_k}: C(k)\otimes C(j_1)\otimes\cdots\otimes C(j_k)\map C(j)$$ for $k\geqslant 1$ and $j_s\geqslant 0$ for all $s=1,\cdots, k$ subject to $j=\sum_{s} j_s$. These data should be intercompatible in a specific manner, i.e., satisfy certain associativity, unitality, and equivariance axioms (see, for instance, \cite{KriMay}). Neglecting the actions of the permutation groups and the corresponding equivariance conditions one arrives at the notion of a {\em nonsymmetric operad}. In this section a space is tacitly assumed to be compactly generated and weakly Hausdorff. Observe that such spaces constitute a symmetric monoidal category under cartesian product with $\pt$ (a singleton space) as a unit object and hence one may consider operads in spaces.

Recall that a unital separable $C^*$-algebra $\cD$ ($\cD\neq\CC$) is called {\em strongly self-absorbing} if there is an isomorphism $\cD\overset{\sim}{\map}\cD\prot\cD$ that is approximately unitarily equivalent to the first factor embedding $\cD\map\cD\prot\cD$ sending $d\mapsto d\otimes \one_\cD$ \cite{TomWin}. Such $C^*$-algebras turn out to be simple and nuclear. The Cuntz algebra $\Oinf$ is a prominent example of such a $C^*$-algebra. For any strongly self-absorbing $C^*$-algebra $\cD$ we set $\cD(j) = \Hom_1(\cD^{\prot j},\cD)$, i.e., the space of unital full $*$-homomorphisms $\cD^{\prot j}\map\cD$ with the point-norm topology. Since $\cD$ is a separable $C^*$-algebra, it follows from Lemma 22 of \cite{Mey1} that each $\cD(j)$ is a metrizable topological space. Hence they are all compactly generated and Hausdorff spaces. 

\begin{lem} \label{OperadProm}
The collection $\{\cD(j)\}_{j\geqslant 0}$ can be promoted to an operad in spaces.
\end{lem}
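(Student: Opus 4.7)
The plan is to realise $\{\cD(j)\}_{j\geqslant 0}$ as (a subcollection of) the endomorphism operad of $\cD$ in the symmetric monoidal category $(\CAlg,\prot,\CC)$. I will first define the structural maps explicitly, then check that they actually land in the subspaces of unital full $*$-homomorphisms, verify continuity in the point-norm topology, and finally invoke the standard endomorphism-operad axioms.

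First I would describe the data. For $j=0$ the space $\cD(0)=\Hom_1(\CC,\cD)$ is a singleton (the unit inclusion $1\mapsto\one_\cD$), and for $j\geqslant 1$ the right $\Sigma_j$-action on $\cD(j)$ is given by precomposition with the permutation automorphism of $\cD^{\prot j}$ that reorders tensor factors. The unit $\eta:\pt\map\cD(1)$ sends the point to $\id_\cD$. For the composition, given $\phi\in\cD(k)$ and $\psi_s\in\cD(j_s)$ ($s=1,\dots,k$), set
\[
\gamma(\phi;\psi_1,\dots,\psi_k)\;:=\;\phi\circ(\psi_1\prot\cdots\prot\psi_k):\cD^{\prot j}\map\cD^{\prot k}\map\cD,
\]
where $j=\sum_s j_s$ and $\psi_1\prot\cdots\prot\psi_k$ is the functorial maximal tensor product of $*$-homomorphisms.

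Next I would check that these operations preserve the defining conditions of $\cD(j)$. Unitality is immediate: tensor products and compositions of unital $*$-homomorphisms are unital; the permutation action and the unit $\id_\cD$ are manifestly unital. For fullness I would invoke simplicity of $\cD$ (which follows from strong self-absorption, cf.\ \cite{TomWin}): any unital, hence nonzero, $*$-homomorphism into the simple $C^*$-algebra $\cD$ has image generating $\cD$ as a closed two-sided ideal, so every unital $*$-homomorphism is automatically full. This takes care of the cases $j_s=0$ as well, since then $\psi_s$ is the canonical inclusion $\CC\map\cD$ and the natural identifications $\CC\prot\cD\cong\cD$ reduce matters to the generic case.

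The main technical obstacle is continuity of $\gamma$ in the point-norm topologies. I would establish this in two steps: (i) the maximal tensor product functor $\prot:\Hom_1(A,C)\times\Hom_1(B,D)\map\Hom_1(A\prot B,C\prot D)$ is point-norm continuous, which follows by a telescoping argument on elementary tensors using that $*$-homomorphisms are norm-decreasing, together with a density and uniform boundedness argument extending the estimate from the algebraic tensor product to its $C^*$-completion; (ii) composition $\Hom_1(B,C)\times\Hom_1(A,B)\map\Hom_1(A,C)$ is jointly point-norm continuous for the same reason (one applies $\|\phi_n\psi_n(a)-\phi\psi(a)\|\leqslant\|\phi_n(\psi_n(a)-\psi(a))\|+\|(\phi_n-\phi)\psi(a)\|$ and uses that $\|\phi_n\|\leqslant 1$). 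Iterating (i) over the $k$ factors and composing once with $\phi$ via (ii) gives continuity of $\gamma$.

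Finally, associativity, unitality, and equivariance of $\gamma$ are formal consequences of the associativity and coherence of the symmetric monoidal structure $(\CAlg,\prot,\CC)$ together with associativity of composition of $*$-homomorphisms: these are precisely the axioms satisfied by the endomorphism operad of any object in a symmetric monoidal category, and they can be verified directly on elementary tensors and then extended by continuity. This completes the promotion of $\{\cD(j)\}_{j\geqslant 0}$ to an operad in spaces.
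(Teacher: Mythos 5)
Your proposal is correct and follows essentially the same route as the paper: the composition $\gamma(\phi;\psi_1,\dots,\psi_k)=\phi\circ(\psi_1\prot\cdots\prot\psi_k)$, the unit $\eta(\pt)=\id_\cD$, and the $\Sigma_j$-action by permuting tensor factors are exactly the paper's definitions, with the operad axioms inherited from the symmetric monoidal structure of $(\CAlg,\prot,\CC)$. You simply make explicit several points the paper leaves to the reader (closure of $\cD(j)$ under the operations, where fullness is in fact automatic from unitality alone since $\one_\cD$ lies in the image, and point-norm continuity of composition and of $\prot$), all of which are handled correctly.
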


\begin{proof}
Let us define $\gamma$ and $\eta$ as follows: \beqn
\gamma:\cD(k)\times \cD(j_1)\times\cdots\times \cD(j_k)&\map & \cD(j)=\cD(j_1 +\cdots + j_k)\\
(\alpha, \beta_1, \cdots, \beta_k) &\mapsto & \alpha\circ (\beta_1\otimes\cdots\otimes\beta_k)
\eeqn and $\eta: \pt \map \cD(1)$ sends the unique element in $\pt$ to $\id:\cD\map \cD$. If we let the permutation group $\Sigma_j$ act on $\cD(j)=\Hom_1(\cD^{\prot j},\cD)$ by permuting the tensor factors of $\cD^{\prot j}$, then it can be verified that the data satisfy the associativity, unitality, and equivariance axioms. 
\end{proof}

\noindent
Thanks to the above Lemma we introduce the following operad:

\begin{defn} \label{D}
 For any strongly self-absorbing $C^*$-algebra $\cD$ we call the operad that the collection $\{\cD(j)\}_{j\geqslant 0}$ defines as the {\em strongly self-absorbing $\cD$-operad}. 
 \end{defn}
 
 \begin{rem}
 For $j=0$ we get $\cD(0)=\pt$, i.e., a singleton set containing the unique unital inclusion $\CC\hookrightarrow\cD$. Hence the strongly self-absorbing $\cD$-operad is a {\em reduced operad}.
\end{rem}

\begin{prop} \label{Einfinity}
Every strongly self-absorbing $\cD$-operad is an $\E_\infty$-operad.
\end{prop}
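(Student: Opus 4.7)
The plan is to verify the two defining properties of an $\E_\infty$-operad for the collection $\{\cD(j)\}$: first, that each $\cD(j)$ is weakly contractible in the point-norm topology, and second, that the $\Sigma_j$-action on $\cD(j)$ is free. For the contractibility I would reduce to the case $j=1$ as follows. Iterating the isomorphism $\cD\prot\cD\cong\cD$ supplied by strong self-absorption produces a (non-canonical) isomorphism $\cD^{\prot j}\cong\cD$ for every $j\geqslant 1$; precomposition with such an isomorphism identifies $\cD(j)=\Hom_1(\cD^{\prot j},\cD)$ with $\Hom_1(\cD,\cD)$ as topological spaces. The contractibility of $\Hom_1(\cD,\cD)$ is a known result in the theory of strongly self-absorbing $C^*$-algebras, which I would invoke directly: it follows from the combination of the Toms--Winter characterization (every unital endomorphism of $\cD$ is approximately unitarily equivalent to $\id_\cD$) and the contractibility of the unitary group $U(\cD)$ established by Dadarlat--Pennig, or alternatively from Dadarlat's result that $\Hom_1(A, A\prot\cD)$ is contractible for every separable unital $A$, applied to $A=\cD$.

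The freeness of the $\Sigma_j$-action is then elementary. The cases $j=0,1$ are automatic, and for $j\geqslant 2$ it suffices to verify that a transposition has no fixed point. Taking $\sigma=(1\,2)$ (the general case follows by relabelling) and assuming $\varphi\circ\sigma=\varphi$ for some $\varphi\in\cD(j)$, I would form the unital $*$-homomorphism $\psi:\cD\to\cD$ defined by $\psi(a)=\varphi(a\otimes\one\otimes\cdots\otimes\one)$; the flip-invariance of $\varphi$ forces the same map to coincide with $a\mapsto\varphi(\one\otimes a\otimes\one\otimes\cdots\otimes\one)$. The commutation $(a\otimes\one\otimes\cdots)(\one\otimes b\otimes\one\otimes\cdots)=(\one\otimes b\otimes\one\otimes\cdots)(a\otimes\one\otimes\cdots)$ inside $\cD^{\prot j}$ then gives $\psi(a)\psi(b)=\psi(b)\psi(a)$ for all $a,b\in\cD$. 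Since $\cD$ is simple, the unital map $\psi$ is injective, so this would force $\cD$ to be abelian, contradicting the fact that a strongly self-absorbing $\cD\neq\CC$ is simple and unital and hence non-abelian (an abelian simple unital $C^*$-algebra must be $\CC$).

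The hard part is clearly the contractibility input for $\Hom_1(\cD,\cD)$, which is not transparent from the bare definition of strong self-absorption; everything else is a short and direct verification. Fortunately this contractibility is treated thoroughly in the modern literature on strongly self-absorbing $C^*$-algebras, so my approach is to cite the relevant result rather than reproduce its proof.
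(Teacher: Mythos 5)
Your proof is correct. The contractibility half is essentially the paper's argument: the paper likewise combines the isomorphism $\cD\cong\cD^{\prot j}$ with the Dadarlat--Pennig contractibility theorem (their Theorem 2.3), which is exactly the external input you identify as the hard ingredient. For freeness you take a genuinely different route: the paper simply observes that, by simplicity of $\cD^{\prot j}$, every $f\in\cD(j)$ is injective, so $f\circ\sigma=f$ forces $\sigma$ to be the identity automorphism of $\cD^{\prot j}$ and hence (since $\cD\neq\CC$, so distinct permutations give distinct automorphisms) the trivial permutation; you instead extract from a fixed permutation a unital endomorphism $\psi$ of $\cD$ with commutative image, contradicting the non-commutativity of the simple unital $\cD\neq\CC$. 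Both arguments rest on simplicity and are equally valid; the paper's is a one-liner, while yours is longer but makes explicit where the hypothesis $\cD\neq\CC$ enters. One small imprecision in your write-up: freeness requires \emph{every} nontrivial $\sigma$ to act without fixed points, and a nontrivial subgroup of $\Sigma_j$ need not contain a transposition (a $3$-cycle in $\Sigma_3$ generates one such), so the stated reduction to transpositions is not literally justified. This is only cosmetic, however: your computation uses nothing beyond the existence of an index moved by $\sigma$, so it applies verbatim to an arbitrary nontrivial permutation.
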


\begin{proof}
 We need to show that each $\cD(j)$ for $j\geqslant 0$ is contractible and the action of $\Sigma_j$ on each $\cD(j)$ is free. The contractibility of each $\cD(j)$ follows from Theorem 2.3 of \cite{DadPen} and the fact that $\cD\cong\cD^{\prot j}$. In order to see the freeness of the $\Sigma_j$-action on $\cD(j)$ we check the stabilizers. For any $f\in\cD(j)$ suppose $f\sigma = f$. Owing to the simplicity of $\cD^{\prot n}$ any such $f\in \cD(j)$ must be a monomorphism whence $\sigma$ has to be the trivial permutation. 
\end{proof}

\begin{rem}
The underlying nonsymmetric operad of every strongly self-absorbing $\cD$-operad is an $\A_\infty$-operad.
\end{rem}

A functor $G:(\cE,\otimes_\cE,\one_\cE)\functor (\cF,\otimes_\cF,\one_\cF)$ between symmetric monoidal categories is called {\em lax symmetric monoidal} if there is a morphism $\one_\cF\map G(\one_\cE)$ in $\cF$ and natural transformations \beq \label{monoidal} \kappa: G(A)\otimes_\cF G(B)\map G(A\otimes_\cE B)\eeq for all $A,B\in\cE$ that satisfy certain well-known associativity, unitality, and symmetry conditions. If $\cE$ and $\cF$ are (pointed) topological categories, i.e., they are enriched over (pointed) spaces, then a functor $G:\cE\map\cF$ is called {\em enriched} if for all $A,B\in\cE$ the induced map $\Map_\cE(A,B)\map\Map_\cF(G(A),G(B))$ is (pointed) continuous. Here we have adopted the convention that in the enriched setting we denote the (pointed) space of morphisms by $\Map_\cE(-,-)$, $\Map_\cF(-,-)$, and so on. A symmetric monoidal pointed topological category $\cB=(\cB,\wedge,\one_\cB)$ is said to be equipped with a closed action of pointed spaces $\cS_*$ if, for every $X\in\cB$ and for every pair of pointed spaces $K,L$, the following hold:

\begin{itemize}
 \item the functor $(-)\wedge X: \cS_*\functor\cB$ is the (enriched) left adjoint of $\Map_\cB(X,-):\cB\functor\cS_*$, 
  \item the functor $K\wedge (-): \cB\functor\cB$ admits an (enriched) right adjoint $(-)^K:\cB\functor\cB$, and
 \item there are coherent natural isomorphisms $(K\wedge L)\wedge X\cong K\wedge (L\wedge X)$ and $S^0\wedge X\cong X$.
\end{itemize} Let $C$ be any operad in spaces. An object $X\in\cB$ is said to be an {\em algebra over $C$} if there are maps $\theta: C(j)_+\wedge X^{\wedge j}\map X$ in $\cB$ for all $j\geqslant 0$, that are associative, unital, and equivariant in a suitable sense \cite{KriMay}. Moreover, an object $M\in\cB$ is said to be a {\em module over $X$} if there are maps $\lambda: C(j)_+\wedge X^{\wedge (j-1)}\wedge M\map M$ in $\cB$ for all $j\geqslant 1$ that are again associative, unital, and equivariant as explained in \cite{KriMay}. One can also define $X$ as an algebra object in $\cB$ over $C$ via a morphism of topological operads $C_+\map\mathtt{End}_\cB(X)$, where $\mathtt{End}_\cB(X)$ denotes the endomorphism operad of $X$ in $\cB$ (so that $\mathtt{End}_\cB(X)(j)=\Map_\cB(X^{\wedge j},X)$).

A typical example for $\cB$ that the reader should keep in mind is the category of {\em symmetric spectra} $\Sps$. It has an associative and commutative smash product $\wedge$ (with the sphere spectrum $\mathbb{S}$ as the unit object) that is accompanied by a well developed theory of rings and modules. The category of symmetric spectra in pointed spaces is enriched, tensored and cotensored over $\cS_*$, i.e., it admits a closed action of $\cS_*$ (see Propositions 1.3.1 and 1.3.2 of \cite{HSS} and Example 3.36 of \cite{SchwedeBook} for their topological counterparts). Therefore, the symmetric monoidal category $(\Sps,\wedge,\mathbb{S})$ satisfies the assumptions on $(\cB,\wedge,\one_\cB)$ mentioned above. It is also a stable model category, whose homotopy category is equivalent to $\hSp$. For further details consult \cite{HSS,SchwedeBook}.

Recall that $(\CAlg,\prot,\CC)$ is a symmetric monoidal category, where $\prot$ is the maximal (or the minimal) $C^*$-tensor product. It is also enriched over (pointed) spaces if we endow the morphism sets with the point-norm topology. Let $\cC$ denote a symmetric monoidal topological subcategory of that of $C^*$-algebras that contains all strongly self-absorbing $C^*$-algebras, e.g., $\cC=\Csep$ or the category of nuclear separable $C^*$-algebras. In each case $\prot$ can be either the maximal or the minimal $C^*$-tensor product.

\begin{thm} \label{Operad}
 Let $\cD$ be any strongly self-absorbing $C^*$-algebra and $(\cB,\wedge,\one_\cB)$ be any symmetric monoidal pointed topological category that is equipped with a closed action of pointed spaces. Let $\cC$ be a symmetric monoidal topological subcategory of $\CAlg$ as above and $F:\cC\functor\cB$ be a lax symmetric monoidal enriched functor. Then $F(\cD)$ is an algebra object in $\cB$ over the strongly self-absorbing $\cD$-operad. Moreover, for any $A\in\cC$, the object $F(\cD\prot A)\cong F(A\prot \cD)$ is a module over $F(\cD)$. 
\end{thm}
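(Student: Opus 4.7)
The plan is to exploit the enrichment of $F$ together with its lax monoidal structure $\kappa$ to transport the composition maps of the strongly self-absorbing $\cD$-operad onto $F(\cD)$, and then to use the same data with one distinguished slot to turn $F(A\prot\cD)$ into a module. Following the endomorphism-operad reformulation recalled just before the theorem, the most efficient route is to construct, for every $j\geqslant 0$, a continuous map $\cD(j)\map\Map_\cB(F(\cD)^{\wedge j},F(\cD))$ whose adjoint will furnish the required structure map $\theta_j:\cD(j)_+\wedge F(\cD)^{\wedge j}\map F(\cD)$.

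Concretely, I would send $\alpha\in\cD(j)=\Hom_1(\cD^{\prot j},\cD)$ to the composite
\[
F(\cD)^{\wedge j}\xrightarrow{\kappa_j} F(\cD^{\prot j})\xrightarrow{F(\alpha)} F(\cD),
\]
where $\kappa_j$ denotes the $j$-fold iterate of the lax symmetric monoidal transformation of $F$. Continuity in $\alpha$ is exactly the enriched hypothesis applied to $\cD(j)\subset\Map_\cC(\cD^{\prot j},\cD)$. Three coherence checks are then required: (i) unitality, which reduces to $F(\id_\cD)=\id_{F(\cD)}$ and the lax-unitality of $\kappa$; (ii) $\Sigma_j$-equivariance, which reduces to the symmetry coherence of $\kappa$ combined with the fact that $\Sigma_j$ acts on $\cD(j)$ by permuting tensor factors of $\cD^{\prot j}$; and (iii) compatibility with the operadic composition $\gamma$, which unpacks to a square built out of the functoriality identity $F(\alpha\circ(\beta_1\prot\cdots\prot\beta_k))=F(\alpha)\circ F(\beta_1\prot\cdots\prot\beta_k)$ together with the associativity coherence of $\kappa$.

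For the module structure on $F(A\prot\cD)$, I would run the same construction with the last tensor slot replaced by $A\prot\cD$. That is, for $j\geqslant 1$ and $\alpha\in\cD(j)$ I would form the composite
\[
F(\cD)^{\wedge(j-1)}\wedge F(A\prot\cD)\xrightarrow{\kappa}F(\cD^{\prot(j-1)}\prot A\prot\cD)\xrightarrow{\simeq}F(A\prot\cD^{\prot j})\xrightarrow{F(\id_A\prot\alpha)}F(A\prot\cD),
\]
where the middle isomorphism comes from the braiding of $(\cC,\prot,\CC)$ (we use here that $F$ is \emph{symmetric} lax monoidal, not merely lax monoidal); the canonical identification $F(\cD\prot A)\cong F(A\prot\cD)$ is of the same nature. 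The module axioms relative to the algebra structure on $F(\cD)$ then follow by exactly the same style of coherence check as in the algebra case, the only new ingredient being that $\id_A\prot(-)$ is continuous and compatible with operadic composition on the $\cD$ factors.

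The main obstacle is not conceptual but purely the bookkeeping of coherence diagrams: one has to verify that each associativity, unitality, equivariance, and module-compatibility square commutes, and via the lax symmetric monoidal and enriched hypotheses on $F$ these all reduce to squares that already commute in $\cC$, because $\gamma$ and $\eta$ were defined in Lemma \ref{OperadProm} entirely in terms of $\prot$ and $\id_\cD$. Notably, no further property of $\cD$ beyond its membership in $\cC$ is invoked at this stage; strong self-absorption only enters later, through Proposition \ref{Einfinity}, to ensure that the operad acting on $F(\cD)$ is $\E_\infty$.
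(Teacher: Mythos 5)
Your construction of the structure maps $\theta_j$ and $\lambda_j$ as adjoints of the continuous maps $\alpha\mapsto F(\alpha)\circ\kappa_j$ (using the enrichment of $F$, the lax symmetric monoidal transformation $\kappa$, and the closed action of pointed spaces on $\cB$) is exactly the paper's proof, including the reduction of the coherence axioms to the functoriality of $F$ and the coherence of $\kappa$. The additional observation that strong self-absorption of $\cD$ plays no role here and only enters via Proposition \ref{Einfinity} is correct but does not alter the argument.
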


\begin{proof}
 We define for all $j\geqslant 0$ the following maps \beqn \theta: \cD(j)_+\wedge F(\cD)^{\wedge j} &\map& F(\cD) \\ (f,(x_1,\cdots,x_j)) &\mapsto& f_*(\kappa(x_1,\cdots,x_j)). \eeqn Here $f_*:F(\cD^{\prot j})\map F(\cD)$ is the map induced by $f\in\cD(j)$ and $\kappa:F(\cD)^{\wedge j}\map F(\cD^{\prot j})$ is the canonical map induced by \eqref{monoidal}. Similarly, we define for all $j\geqslant 1$ the following maps \beqn \lambda: \cD(j)_+\wedge F(\cD)^{\wedge (j-1)}\wedge F(\cD\prot A) &\map&  F(\cD\prot A) \\ (f,(x_1,\cdots,x_{j-1}),y) &\mapsto& (f\otimes\id)_*(\kappa'((x_1,\cdots,x_j),y)). \eeqn Here $\kappa': F(\cD)^{\wedge (j-1)}\wedge F(\cD\prot A)\map F(\cD^{\prot (j-1)})\wedge F(\cD\prot A)\map F(\cD^{\prot j}\prot A)$ is induced by the composition of the canonical maps furnished by \eqref{monoidal} and $(f\otimes\id)_*: F(\cD^{\prot j}\prot A)\map F(\cD\prot A)$ is the map induced by $f\otimes\id: \cD^{\prot j}\prot A\map \cD\prot A$. 
 
 We claim that $\theta$ and $\lambda$ are morphisms in $\cB$. The enriched functor $F$ gives a continuous map $\cD(j)_+\map \Map_\cB(F(\cD^{\prot j}),F(\cD))$. There is also a continuous map $\Map_\cB(F(\cD^{\prot j}),F(\cD))\map \Map_\cB(F(\cD)^{\wedge j},F(\cD))$ induced by $\kappa:F(\cD)^{\wedge j}\map F(\cD^{\prot j})$. The composite continuous map $\cD(j)_+\map \Map_\cB(F(\cD)^{\wedge j},F(\cD))$ translates to a map $\cD(j)_+\wedge F(\cD)^{\wedge j} \map F(\cD)$ in $\cB$ via the closed action of $\cS_*$ on $\cB$, which is seen to be $\theta$. Similar arguments show that $\lambda$ is also a map in $\cB$. The axiom for unitality says that the following diagrams commute:
 
\beqn
\xymatrix{
S^0\wedge  F(\cD)\ar[d]_{\eta\wedge\id}\ar[r]^\cong &  F(\cD) &&  S^0\wedge  F(\cD\prot A)\ar[d]_{\eta\wedge\id}\ar[r]^\cong &  F(\cD\prot A)\\
\cD(1)_+\wedge F(\cD)\ar[ru]_\theta &&& \cD(1)_+\wedge F(\cD\prot A).\ar[ru]_\lambda
}
\eeqn This condition is clear from the fact that $\eta$ maps the non-basepoint in $S^0$ to $\id:\cD\map\cD$ (see Lemma \ref{OperadProm}). Now using the hypothesis that $F(-)$ is a lax symmetric monoidal functor one can check the required associativity and equivariance conditions. 
\end{proof}

\begin{ex} \label{OpAction}
We demonstrate the utility of our result with an important and interesting example. There is a construction of the topological $\K$-theory spectrum $\Kw(-)$ of a separable $C^*$-algebra with values in symmetric spectra $\Sps$ that satisfies the hypotheses of the above Theorem (see Lemma 3.4 of \cite{DelEmeKanMey}). Note that the authors work in the symmetric monoidal category of $\ZZ/2$-graded separable $C^*$-algebras $\Csep^{\ZZ/2}$ and use the graded minimal $C^*$-tensor product for the symmetric monoidal structure thereon. Hence we equip $\Csep$ with the minimal $C^*$-tensor product and consider the (lax) symmetric monoidal enriched functor $\Csep\functor\Csep^{\ZZ/2}$ that endows any $A\in\Csep$ with the trivial $\ZZ/2$-grading. The functor $\Kw:\Csep^{\ZZ/2}\functor\Sps$ works even for separable $C^*$-algebras equipped with an additional compact group action. It is a variation of the construction in \cite{BunJoaSto} that landed in orthogonal spectra. The enrichment of the composite functor $\Csep\functor\Csep^{\ZZ/2}\overset{\Kw}{\functor}\Sps$ follows from the fact that $\Csep$ and $\Csep^{\ZZ/2}$ are themselves enriched over (pointed) spaces and $\Kw(-)$ is defined as a symmetric sequence of mapping spaces in $\Csep^{\ZZ/2}$ (see Section 3.1 of \cite{DelEmeKanMey}).
\end{ex}

\begin{rem}
Thanks to Proposition \ref{Einfinity} and Theorem \ref{Operad}, for any strongly self-absorbing $C^*$-algebra $\cD$, one might call $\Kw(\cD)$ an $\E_\infty$-algebra object in symmetric spectra. Using Remark 0.14 of \cite{MMSS} (see also Theorem 1.4 of \cite{ElmMan}) one can rectify this $\E_\infty$-algebra structure on $\Kw(\cD)$ (resp. the module structure on $\Kw(\cD\prot A)$) to a strictly commutative algebra structure in $\Sps$ (resp. to a strict module structure over the latter).
\end{rem}

\section{$\K$-regularity of $\Oinf$-stable $C^*$-algebras}
Let $F$ be any functor on $\CAlg$. A $C^*$-algebra $A$ is called {\em $F$-regular} if the canonical inclusion $A\map A[t_1,\cdots,t_n]$ induces an isomorphism $F(A)\overset{\sim}{\map} F(A[t_1,\cdots ,t_n])$ for all $n\in\NN$. This map has a one-sided inverse induced by the evaluation map $\ev_0$. Rosenberg conjectured that any $C^*$-algebra $A$ is $\K_0$-regular. Using the techniques developed to prove the Karoubi conjectures  \cite{Hig2}, it is shown in Theorem 3.4 of \cite{RosComparison} that the conjecture is true if $A$ is stable. In fact, the Theorem in \cite{RosComparison} asserts that a stable $C^*$-algebra is $\K_m$-regular for all $m\in\ZZ$. A $C^*$-algebra is called {\em $\K$-regular} if it is $\K_m$-regular for all $m\in\ZZ$.

\begin{thm} \label{Kreg}
The $C^*$-algebras $A\prot\Oinf$ are $\K$-regular for all $A\in\CAlg$.
\end{thm}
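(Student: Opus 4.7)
Fix $m\in\ZZ$ and $n\in\NN$, write $t_\bullet=(t_1,\ldots,t_n)$, and denote by $\alpha:A\prot\Oinf\hookrightarrow (A\prot\Oinf)[t_\bullet]$ the polynomial inclusion. Since $\alpha$ is retracted by the evaluation map $\ev_0$ at $t_1=\cdots=t_n=0$, the induced map $\K_m(\alpha)$ is automatically split injective; the task therefore reduces to showing that $\K_m(\alpha)$ is surjective.

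The strategy is to recycle the diagram chase of Proposition \ref{CorPhi}, replacing the comparison with topological $\K$-theory by the polynomial inclusion. Applying $A\prot(-)$ to diagram \eqref{BasicCD} yields a factorization
\[
A\prot\Oinf\xrightarrow{\;R\;} A\prot\Oinf\prot\cpt\xrightarrow{\;S\;} A\prot\Oinf
\]
in $\CAlg$ whose composite is the (multiplier-)inner endomorphism $x\mapsto (\one_A\otimes s_1)\,x\,(\one_A\otimes s_1)^*$. Applying $(-)[t_\bullet]$ termwise and then $\K_m$ produces the commutative diagram
\[
\xymatrix{
\K_m(A\prot\Oinf)\ar[r]^-{\K_m(R)}\ar[d]^{\K_m(\alpha)} & \K_m(A\prot\Oinf\prot\cpt)\ar[r]^-{\K_m(S)}\ar[d]^{\beta} & \K_m(A\prot\Oinf)\ar[d]^{\K_m(\alpha)} \\
\K_m((A\prot\Oinf)[t_\bullet])\ar[r]^-{\K_m(R[t_\bullet])} & \K_m((A\prot\Oinf\prot\cpt)[t_\bullet])\ar[r]^-{\K_m(S[t_\bullet])} & \K_m((A\prot\Oinf)[t_\bullet])
}
\]
in which both horizontal composites are the identity because inner endomorphisms are killed by the matrix-stable functor $\K_m$ on $\CAlg$.

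The crucial input is that the middle vertical map $\beta$ is an isomorphism: $A\prot\Oinf\prot\cpt$ is a stable $C^*$-algebra, so by Theorem 3.4 of \cite{RosComparison} it is $\K_m$-regular for every $m\in\ZZ$. A short diagram chase now produces a preimage of any $y\in\K_m((A\prot\Oinf)[t_\bullet])$: set $w=\beta^{-1}(\K_m(R[t_\bullet])(y))$ and compute
\[
\K_m(\alpha)(\K_m(S)(w))=\K_m(S[t_\bullet])(\beta(w))=\K_m(S[t_\bullet])\,\K_m(R[t_\bullet])(y)=y.
\]
Hence $\K_m(\alpha)$ is surjective, and combined with split injectivity it is an isomorphism. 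As $m$ and $n$ were arbitrary, $A\prot\Oinf$ is $\K$-regular; the passage from unital $A$ to arbitrary $A\in\CAlg$ can be handled by the same excision argument already invoked in Proposition \ref{CorPhi}.

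The main obstacle is conceptual rather than technical: one must identify the right auxiliary object ($A\prot\Oinf\prot\cpt$) and exploit simultaneously its $C^*$-stability (triggering Rosenberg's $\K$-regularity result) and Lemma \ref{Basic} (providing the inner splitting compatible with polynomial extension). Once these two ingredients are lined up the proof is a formal diagram chase and, pleasingly, does not require any further visit to the Suslin--Wodzicki or Karoubi--Wodzicki machinery beyond what has already been cited in the paper.
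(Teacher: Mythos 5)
Your proposal is correct and follows essentially the same route as the paper's own proof: the same factorization $A\prot\Oinf\to A\prot\Oinf\prot\cpt\to A\prot\Oinf$ from Lemma \ref{Basic}, the same use of Rosenberg's $\K$-regularity of stable $C^*$-algebras for the middle column, the same inner-endomorphism/matrix-stability argument for the horizontal composites, and excision to reduce to the unital case. The only addition is that you spell out the diagram chase explicitly, which the paper leaves to the reader.
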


\begin{proof}
In order to avoid notational clutter let us set $B[n]:= B[t_1,\cdots,t_n]$ for any $B\in\CAlg$. Using excision we may assume that $A$ is unital. Arguing as in the proof of Proposition \ref{CorPhi} we obtain a commutative diagram

\beq
\xymatrix{
\K_m(A\prot\Oinf)\ar[r]\ar[d] & \K_m(A\prot\Oinf\prot\cpt) \ar[r]\ar[d] & \K_m(A\prot\Oinf)\ar[d] \\
\K_m((A\prot\Oinf)[n])\ar[r] & \K_m((A\prot\Oinf\prot\cpt)[n]) \ar[r] & \K_m ((A\prot\Oinf)[n]). }
\eeq Due to the stability of $A\prot\Oinf\prot\cpt$ the middle vertical arrow is an isomorphism. Moreover, the compositions of the top and the bottom horizontal arrows are again isomorphisms due to the matrix stability of the functor $\K_m(-)$ for unital algebras. Observe that the composite $*$-homomorphisms $A\prot\Oinf\map A\prot\Oinf\prot\cpt\map A\prot\Oinf$ and $(A\prot\Oinf)[n]\map (A\prot\Oinf\prot\cpt)[n]\map (A\prot\Oinf)[n]$ are still inner. Now a similar diagram chase as before enables one to conclude that the left vertical arrow must be an isomorphism.
\end{proof}

\begin{rem}
 Purely infinite simple $C^*$-algebras like $\Oinf$ can be regarded as {\em maximally noncommutative}. Rather surprisingly, one needs fairly sophisticated techniques to establish the $\K$-regularity of commutative $C^*$-algebras (see \cite{RosComparison,CorTho2}). 
\end{rem}

\begin{rem}
 Using Proposition \ref{CorPhi} and Theorem \ref{Kreg} the {\em reduction principle} for assembly maps (see Theorem 1.1 of \cite{MyNovikov}) can be generalized to include $\Oinf$ as coefficients, i.e., for a countable, discrete, and torsion free group $G$, if the Baum--Connes assembly map with complex coefficients is injective (resp. split injective), then the Farrell--Jones assembly map in algebraic $\K$-theory with $\Oinf$-coefficients is also injective (resp. split injective).
\end{rem}

\section{Algebraic $\K$-theory of certain $\Oinf$-stable $C^*$-algebras}
We now explicitly compute the algebraic $\K$-theory groups of certain $\Oinf$-stable $C^*$-algebras. It must be noted that complete calculation of the algebraic $\K$-theory groups of an arbitrary ring is an extremely difficult task in general.

\subsection{Semigroup $C^*$-algebras coming from number theory}
A recent result of Li asserts that for a countable integral domain $R$ with vanishing Jacobson radical (which is, in addition, not a field) the left regular $ax+b$-semigroup $C^*$-algebra $C^*_\lambda(R\rtimes R^\times)$ is $\Oinf$-absorbing, i.e., $C^*_\lambda(R\rtimes R^\times)\prot\Oinf\cong C^*_\lambda(R\rtimes R^\times)$ (see Theorem 1.3 of \cite{LiAxb}). Now we focus on the object of our interest, namely, the left regular $ax+b$-semigroup $C^*$-algebra $C^*_\lambda(R\rtimes R^\times)$ of the ring of integers $R$ of a number field $K$. It is shown in \cite{CunEchLi1} that in this case $$\KT_*(C^*_\lambda(R\rtimes R^\times))\cong \underset{{[X]\in G\setminus\mathcal{I}}}{\oplus} \KT_*(C^*(G_X)),$$ where $\mathcal{I}$ is the set of fractional ideal of $R$, $G=K\rtimes K^\times$, and $G_X$ is the stabilizer of $X$ under the $G$-action on $\mathcal{I}$. The orbit space $G\setminus \mathcal{I}$ can be identified with the ideal class group of $K$.

\noindent
As a consequence of Proposition \ref{CorPhi} we obtain

\begin{thm} \label{ax+b}
 The algebraic $\K$-theory of the $ax+b$-semigroup $C^*$-algebra of the ring of integers $R$ of a number field $K$ is $2$-periodic and explicitly given by $$\K_*(C^*_\lambda(R\rtimes R^\times))\cong\underset{{[X]\in G\setminus\mathcal{I}}}{\oplus} \KT_*(C^*(G_X)).$$
\end{thm}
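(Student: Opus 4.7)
The proof plan is essentially a two-step reduction from algebraic to topological $\K$-theory followed by quotation of a known computation, so the write-up should be brief.

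First, I would invoke Li's theorem (Theorem 1.3 of \cite{LiAxb}) quoted in the paragraph above the statement: since $R$ is the ring of integers of a number field, it is a countable integral domain with vanishing Jacobson radical and is not a field (as long as $K \neq \QQ$ is handled together with the general case uniformly; note that for $R = \ZZ$ the hypotheses of Li's theorem still apply). Therefore $C^*_\lambda(R\rtimes R^\times)$ is $\Oinf$-absorbing, i.e., there is an isomorphism
\[
C^*_\lambda(R\rtimes R^\times) \;\cong\; C^*_\lambda(R\rtimes R^\times)\prot \Oinf.
\]
Set $A := C^*_\lambda(R\rtimes R^\times)$ for brevity.

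Second, I would apply Proposition \ref{CorPhi} to the $C^*$-algebra $A$. By that result, the comparison map $c_n(A\prot\Oinf)\colon \K_n(A\prot \Oinf)\map \KT_n(A\prot\Oinf)$ is an isomorphism for every $n\in\ZZ$. Combining with the $\Oinf$-absorption isomorphism above, one obtains natural isomorphisms
\[
\K_*(A)\;\cong\;\K_*(A\prot\Oinf)\;\cong\;\KT_*(A\prot\Oinf)\;\cong\;\KT_*(A),
\]
where the last step uses that $a\mapsto a\otimes \one_{\Oinf}$ is a $\KK$-equivalence and topological $\K$-theory is $\KK$-invariant. In particular, the algebraic $\K$-theory of $A$ inherits the $2$-periodicity of topological $\K$-theory via Bott periodicity.

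Finally, I would plug in the explicit computation of $\KT_*(C^*_\lambda(R\rtimes R^\times))$ from \cite{CunEchLi1} that was recalled just before the theorem, namely
\[
\KT_*\bigl(C^*_\lambda(R\rtimes R^\times)\bigr)\;\cong\;\bigoplus_{[X]\in G\setminus\mathcal{I}} \KT_*(C^*(G_X)),
\]
yielding the stated formula for $\K_*(C^*_\lambda(R\rtimes R^\times))$. The only potential obstacle is a bookkeeping one: one should make sure Li's $\Oinf$-absorption hypothesis genuinely applies to the ring of integers of every number field (including $\QQ$); aside from verifying this, the argument is a formal combination of Proposition \ref{CorPhi} with the cited computations and requires no new input.
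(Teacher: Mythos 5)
Your proposal is correct and is exactly the argument the paper intends: Li's $\Oinf$-absorption result reduces the claim to Proposition \ref{CorPhi}, which identifies algebraic with topological $\K$-theory (hence $2$-periodicity), and the formula then follows from the computation of \cite{CunEchLi1}. Your side remark about the hypotheses of Li's theorem is easily settled: the ring of integers of any number field (including $\ZZ$) is a countable integral domain that is not a field and has infinitely many maximal ideals, so its Jacobson radical vanishes.
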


\subsection{$\Oinf$-stabilized noncommutative tori}
We recall some basic material before stating our result. A good reference for generalities on noncommutative tori is Rieffel's survey \cite{RieNCTori}. For any real-valued skew bilinear form $\theta$ on $\ZZ^n$ ($n\geqslant 2$) the $C^*$-algebra of the noncommutative $n$-torus $A^n_\theta$ can be defined as the universal $C^*$-algebra generated by unitaries $U_x\in\ZZ^n$ subject to the relation $$U_x U_y = \textup{exp}(\pi i \theta(x,y))U_{x+y} \quad\quad\quad\forall x,y\in\ZZ^n.$$ Using the Pimnser--Voiculescu exact sequence one can compute the $\KT$-theory of $A^n_\theta$ as an abelian group, namely, \beq \label{Ktop} \KT_0(A^n_\theta)\simeq \ZZ^{2^{n-1}} \quad \text{ and }\quad \KT_1(A^n_\theta)\simeq \ZZ^{2^{n-1}}.\eeq

\begin{thm} \label{NCTori}
 The algebraic $\K$-theory of the $\Oinf$-stabilized noncommutative $n$-torus $A^n_\theta$ is $2$-periodic and explicitly given by $$\K_0(A^n_\theta\prot\Oinf)\simeq \ZZ^{2^{n-1}} \quad \text{ and }\quad \K_1(A^n_\theta\prot\Oinf)\simeq \ZZ^{2^{n-1}}.$$
\end{thm}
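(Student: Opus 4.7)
The plan is to reduce the algebraic $\K$-theory computation to the (already known) topological $\K$-theory computation by invoking the machinery of the previous sections. The whole content of the theorem is that, after $\Oinf$-stabilization, algebraic $\K$-theory sees nothing that topological $\K$-theory doesn't already see.

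First, I would apply Proposition \ref{CorPhi} to the separable $C^*$-algebra $A=A^n_\theta$. This immediately produces natural isomorphisms
\[
c_m(A^n_\theta\prot\Oinf):\K_m(A^n_\theta\prot\Oinf)\xrightarrow{\ \sim\ }\KT_m(A^n_\theta\prot\Oinf)\qquad\text{for all }m\in\ZZ,
\]
so the algebraic $\K$-groups on the left-hand side of the claim coincide with the topological $\K$-groups on the right-hand side, and in particular inherit Bott $2$-periodicity from $\KT_*$.

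Next, I would eliminate $\Oinf$ on the topological side. The canonical unital $*$-homomorphism $\CC\to\Oinf$ is a $\KK$-equivalence (this is a standard fact about $\Oinf$ and underlies its appearance throughout the paper; it also follows from $\KT_0(\Oinf)\cong\ZZ$ generated by the unit and $\KT_1(\Oinf)=0$), so tensoring with $\id_{A^n_\theta}$ yields a $\KK$-equivalence $A^n_\theta\to A^n_\theta\prot\Oinf$. Applying the homotopy-, stability-, and $\KK$-invariant functor $\KT_*(-)$ gives
\[
\KT_m(A^n_\theta\prot\Oinf)\ \cong\ \KT_m(A^n_\theta)\qquad\text{for all }m\in\ZZ.
\]

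Finally, I would invoke the classical Pimsner--Voiculescu computation recorded in \eqref{Ktop}, namely $\KT_0(A^n_\theta)\simeq\ZZ^{2^{n-1}}\simeq\KT_1(A^n_\theta)$, and chain the three isomorphisms together to obtain the stated formulas. I do not anticipate any real obstacle here: once Proposition \ref{CorPhi} and the $\KK$-triviality of tensoring with $\Oinf$ are in hand, the proof is essentially a bookkeeping exercise citing \eqref{Ktop}. The only mild subtlety worth flagging explicitly is that Proposition \ref{CorPhi} is stated for the maximal tensor product, but $A^n_\theta$ is nuclear so this is immaterial.
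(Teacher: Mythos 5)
Your proposal is correct and follows essentially the same route as the paper: Proposition \ref{CorPhi} to identify algebraic with topological $\K$-theory, elimination of the $\Oinf$ factor, and then the Pimsner--Voiculescu computation \eqref{Ktop}. The only cosmetic difference is that you justify $\KT_*(A^n_\theta\prot\Oinf)\cong\KT_*(A^n_\theta)$ via the $\KK$-equivalence $\CC\to\Oinf$ (which the paper itself uses in the proof of Theorem \ref{GenHom}), whereas the paper invokes the K{\"u}nneth theorem together with the UCT; both are standard and interchangeable here.
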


\begin{proof}
 By Proposition \ref{CorPhi} one has an isomorphism $\K_*(A^n_\theta\prot\Oinf)\cong\KT_*(A^n_\theta\prot\Oinf)$. Observe that $\KT_0(\Oinf)\simeq\ZZ$ and $\KT_1(\Oinf)\simeq 0$ and all $C^*$-algebras in sight belong to the UCT-class. Using the K{\"u}nneth Theorem (or possibly $\Oinf$-stability in $\KT$-theory) one now deduces that $\KT_*(A^n_\theta\prot\Oinf)\cong\KT_*(A^n_\theta)$. Now use Equation \eqref{Ktop}.
\end{proof}

\noindent
We just determined the isomorphism type of the algebraic $\K$-theory groups of $A^n_\theta\prot\Oinf$. One can also describe the elements in these groups using Rieffel's results in \cite{RieProj}.

\begin{rem} \label{RieElt}
 It follows from \cite{RieProj} that for irrational $\theta$ the projections in $A^n_\theta$ generate all of $\K_0(A^n_\theta\prot\Oinf)$ and $$\K_1(A^n_\theta\prot\Oinf)\cong\KT_1(A^n_\theta\prot\Oinf)\cong\KT_1(A^n_\theta)\overset{\sim}{\leftarrow}UA^n_\theta/U^0A^n_\theta.$$ Here $UA^n_\theta$ denotes the group of unitary elements in $A^n_\theta$ and $U^0A^n_\theta$ denotes the connected component of the identity element of $UA^n_\theta$. Thus one obtains a good description of the elements of the algebraic $\K$-theory groups in low degrees in terms of projections and unitaries.
\end{rem}


\bibliographystyle{abbrv}

\bibliography{/home/ibatu/Professional/math/MasterBib/bibliography}

\begin{thebibliography}{10}

\bibitem{BluGepTab}
A.~J. Blumberg, D.~Gepner, and G.~Tabuada.
\newblock A universal characterization of higher algebraic {K}-theory.
\newblock {\em Geom. Topol.}, 17(2):733--838, 2013.

\bibitem{BEM2}
P.~Bouwknegt, J.~Evslin, and V.~Mathai.
\newblock {$T$}-duality: topology change from {$H$}-flux.
\newblock {\em Comm. Math. Phys.}, 249(2):383--415, 2004.

\bibitem{BMRS}
J.~Brodzki, V.~Mathai, J.~Rosenberg, and R.~J. Szabo.
\newblock D-branes, {RR}-fields and duality on noncommutative manifolds.
\newblock {\em Comm. Math. Phys.}, 277(3):643--706, 2008.

\bibitem{BMRS2}
J.~Brodzki, V.~Mathai, J.~Rosenberg, and R.~J. Szabo.
\newblock Non-commutative correspondences, duality and {D}-branes in bivariant
  {$K$}-theory.
\newblock {\em Adv. Theor. Math. Phys.}, 13(2):497--552, 2009.

\bibitem{BunJoaSto}
U.~Bunke, M.~Joachim, and S.~Stolz.
\newblock Classifying spaces and spectra representing the {$K$}-theory of a
  graded {$C^*$}-algebra.
\newblock In {\em High-dimensional manifold topology}, pages 80--102. World
  Sci. Publ., River Edge, NJ, 2003.

\bibitem{BunNik}
U.~Bunke and T.~Nikolaus.
\newblock T-{D}uality via {G}erby {G}eometry and {R}eductions.
\newblock {\em arXiv:1305.6050}.

\bibitem{BunRumSch}
U.~Bunke, P.~Rumpf, and T.~Schick.
\newblock The topology of {$T$}-duality for {$T^n$}-bundles.
\newblock {\em Rev. Math. Phys.}, 18(10):1103--1154, 2006.

\bibitem{BunSch1}
U.~Bunke and T.~Schick.
\newblock On the topology of {$T$}-duality.
\newblock {\em Rev. Math. Phys.}, 17(1):77--112, 2005.

\bibitem{BunSchSpiTho}
U.~Bunke, T.~Schick, M.~Spitzweck, and A.~Thom.
\newblock Duality for topological abelian group stacks and {$T$}-duality.
\newblock In {\em {$K$}-theory and noncommutative geometry}, EMS Ser. Congr.
  Rep., pages 227--347. Eur. Math. Soc., Z\"urich, 2008.

\bibitem{ConBook}
A.~Connes.
\newblock {\em Noncommutative geometry}.
\newblock Academic Press Inc., San Diego, CA, 1994.

\bibitem{ConRie}
A.~Connes and M.~A. Rieffel.
\newblock Yang-{M}ills for noncommutative two-tori.
\newblock In {\em Operator algebras and mathematical physics (Iowa City, Iowa,
  1985)}, volume~62 of {\em Contemp. Math.}, pages 237--266. Amer. Math. Soc.,
  Providence, RI, 1987.

\bibitem{ConSka}
A.~Connes and G.~Skandalis.
\newblock The longitudinal index theorem for foliations.
\newblock {\em Publ. Res. Inst. Math. Sci.}, 20(6):1139--1183, 1984.

\bibitem{CorPhi}
G.~Corti{\~n}as and N.~C. Phillips.
\newblock Algebraic {$K$}-theory and properly infinite {$C^*$}-algebras.
\newblock {\em arXiv:1402.3197}.

\bibitem{CorTho2}
G.~Corti{\~n}as and A.~Thom.
\newblock Algebraic geometry of topological spaces {I}.
\newblock {\em Acta Math.}, 209(1):83--131, 2012.

\bibitem{CunO}
J.~Cuntz.
\newblock Simple {$C\sp*$}-algebras generated by isometries.
\newblock {\em Comm. Math. Phys.}, 57(2):173--185, 1977.

\bibitem{CunKK}
J.~Cuntz.
\newblock A new look at {$KK$}-theory.
\newblock {\em $K$-Theory}, 1(1):31--51, 1987.

\bibitem{CunDenLac}
J.~Cuntz, C.~Deninger, and M.~Laca.
\newblock {$C^*$}-algebras of {T}oeplitz type associated with algebraic number
  fields.
\newblock {\em Math. Ann.}, 355(4):1383--1423, 2013.

\bibitem{CunEchLi1}
J.~Cuntz, S.~Echterhoff, and X.~Li.
\newblock On the {$K$}-theory of the {$C^*$}-algebra generated by the left
  regular representation of an {O}re semigroup.
\newblock {\em arXiv:1201.4680, to appear in J. Eur. Math. Soc.}

\bibitem{CunMeyRos}
J.~Cuntz, R.~Meyer, and J.~M. Rosenberg.
\newblock {\em Topological and bivariant {$K$}-theory}, volume~36 of {\em
  Oberwolfach Seminars}.
\newblock Birkh\"auser Verlag, Basel, 2007.

\bibitem{DadPen}
M.~Dadarlat and U.~Pennig.
\newblock A {D}ixmier--{D}ouady theory for strongly self-absorbing
  {$C^*$}-algebras.
\newblock {\em arXiv:1302.4468, to appear in J. Reine Angew. Math}.

\bibitem{DanErp}
C.~Daenzer and E.~Van~Erp.
\newblock T-{D}uality for {L}anglands {D}ual {G}roups.
\newblock {\em arXiv:1211.0763}.

\bibitem{DelEmeKanMey}
I.~Dell'Ambrogio, H.~Emerson, T.~Kandelaki, and R.~Meyer.
\newblock A functorial equivariant {$K$}-theory spectrum and an equivariant
  {L}efschetz formula.
\newblock {\em arXiv:1104.3441}.

\bibitem{DonKar}
P.~Donovan and M.~Karoubi.
\newblock Graded {B}rauer groups and {$K$}-theory with local coefficients.
\newblock {\em Inst. Hautes \'Etudes Sci. Publ. Math.}, (38):5--25, 1970.

\bibitem{ElmMan}
A.~D. Elmendorf and M.~A. Mandell.
\newblock Rings, modules, and algebras in infinite loop space theory.
\newblock {\em Adv. Math.}, 205(1):163--228, 2006.

\bibitem{GeiHes}
T.~Geisser and L.~Hesselholt.
\newblock Topological cyclic homology of schemes.
\newblock In {\em Algebraic {$K$}-theory ({S}eattle, {WA}, 1997)}, volume~67 of
  {\em Proc. Sympos. Pure Math.}, pages 41--87. Amer. Math. Soc., Providence,
  RI, 1999.

\bibitem{Hig1}
N.~Higson.
\newblock A characterization of {$KK$}-theory.
\newblock {\em Pacific J. Math.}, 126(2):253--276, 1987.

\bibitem{Hig2}
N.~Higson.
\newblock Algebraic {$K$}-theory of stable {$C\sp *$}-algebras.
\newblock {\em Adv. in Math.}, 67(1):140, 1988.

\bibitem{HSS}
M.~Hovey, B.~Shipley, and J.~Smith.
\newblock Symmetric spectra.
\newblock {\em J. Amer. Math. Soc.}, 13(1):149--208, 2000.

\bibitem{JoaKHom}
M.~Joachim.
\newblock {$K$}-homology of {$C^\ast$}-categories and symmetric spectra
  representing {$K$}-homology.
\newblock {\em Math. Ann.}, 327(4):641--670, 2003.

\bibitem{KaroubiConj}
M.~Karoubi.
\newblock {$K$}-th\'eorie alg\'ebrique de certaines alg\`ebres d'op\'erateurs.
\newblock In {\em Alg\`ebres d'op\'erateurs ({S}\'em., {L}es {P}lans-sur-{B}ex,
  1978)}, volume 725 of {\em Lecture Notes in Math.}, pages 254--290. Springer,
  Berlin, 1979.

\bibitem{KarWod}
M.~Karoubi and M.~Wodzicki.
\newblock Algebraic and {H}ermitian {$K$}-theory of {$K$}-rings.
\newblock {\em Quart. J. Math.}, 64:903--940, 2013.

\bibitem{KelDG}
B.~Keller.
\newblock On differential graded categories.
\newblock In {\em International Congress of Mathematicians. Vol. II}, pages
  151--190. Eur. Math. Soc., Z\"urich, 2006.

\bibitem{HMS}
M.~Kontsevich.
\newblock Homological algebra of mirror symmetry.
\newblock In {\em Proceedings of the {I}nternational {C}ongress of
  {M}athematicians, {V}ol.\ 1, 2 ({Z}\"urich, 1994)}, pages 120--139, Basel,
  1995. Birkh\"auser.

\bibitem{KonNotes}
M.~Kontsevich.
\newblock X{I} {S}olomon {L}efschetz {M}emorial {L}ecture series: {H}odge
  structures in non-commutative geometry.
\newblock In {\em Non-commutative geometry in mathematics and physics}, volume
  462 of {\em Contemp. Math.}, pages 1--21. Amer. Math. Soc., Providence, RI,
  2008.
\newblock Notes by Ernesto Lupercio.

\bibitem{KonMot}
M.~Kontsevich.
\newblock Notes on motives in finite characteristic.
\newblock In {\em Algebra, arithmetic, and geometry: in honor of {Y}u. {I}.
  {M}anin. {V}ol. {II}}, volume 270 of {\em Progr. Math.}, pages 213--247.
  Birkh\"auser Boston, Inc., Boston, MA, 2009.

\bibitem{KriMay}
I.~K{\v{r}}{\'{\i}}{\v{z}} and J.~P. May.
\newblock Operads, algebras, modules and motives.
\newblock {\em Ast\'erisque}, (233):iv+145pp, 1995.

\bibitem{LiSGC}
X.~Li.
\newblock Semigroup {$C^*$}-algebras and amenability of semigroups.
\newblock {\em to appear in JFA, arXiv:1105.5539}.

\bibitem{LiAxb}
X.~Li.
\newblock Semigroup {$C^*$}-algebras of {$ax+b$}-semigroups.
\newblock {\em arXiv:1306.5553}.

\bibitem{MyColoc}
S.~Mahanta.
\newblock Colocalizations of noncommutative spectra and bootstrap categories.
\newblock {\em arXiv:1412.8370}.

\bibitem{MyNSHLoc}
S.~Mahanta.
\newblock Symmetric monoidal noncommutative spectra, strongly self-absorbing
  {$C^*$}-algebras, and bivariant homology.
\newblock {\em arXiv:1403.4130}.

\bibitem{KQ}
S.~Mahanta.
\newblock Higher nonunital {Q}uillen {$K'$}-theory, {$KK$}-dualities and
  applications to topological {$\mathbb T$}-dualities.
\newblock {\em J. Geom. Phys.}, 61(5):875--889, 2011.

\bibitem{MyNovikov}
S.~Mahanta.
\newblock Assembly maps with coefficients in topological algebras and the
  integral {K}-theoretic {N}ovikov conjecture.
\newblock {\em J. Homotopy Relat. Struct.}, 9(2):299--315, 2014.

\bibitem{MyTwist}
S.~Mahanta.
\newblock Twisted {$K$}-theory, {$K$}-homology, and bivariant {C}hern-{C}onnes
  type character of some infinite dimensional spaces.
\newblock {\em Kyoto J. Math.}, 54(3):597--640, 2014.

\bibitem{MMSS}
M.~A. Mandell, J.~P. May, S.~Schwede, and B.~Shipley.
\newblock Model categories of diagram spectra.
\newblock {\em Proc. London Math. Soc. (3)}, 82(2):441--512, 2001.

\bibitem{MarTabApplications}
M.~Marcolli and G.~Tabuada.
\newblock Noncommutative motives and their applications.
\newblock {\em arXiv:1311.2867}.

\bibitem{MatRos2}
V.~Mathai and J.~Rosenberg.
\newblock On mysteriously missing {$T$}-duals, {$H$}-flux and the {$T$}-duality
  group.
\newblock In {\em Differential geometry and physics}, volume~10 of {\em Nankai
  Tracts Math.}, pages 350--358. World Sci. Publ., Hackensack, NJ, 2006.

\bibitem{Mey1}
R.~Meyer.
\newblock Categorical aspects of bivariant {$K$}-theory.
\newblock In {\em {$K$}-theory and noncommutative geometry}, EMS Ser. Congr.
  Rep., pages 1--39. Eur. Math. Soc., Z\"urich, 2008.

\bibitem{Nuiten}
J.~Nuiten.
\newblock Cohomological quantization of local prequantum boundary field theory.
\newblock {\em freely available at
  http://ncatlab.org/schreiber/show/master+thesis+Nuiten}.

\bibitem{QuiNonunitalK0}
D.~Quillen.
\newblock {$K\sb 0$} for nonunital rings and {M}orita invariance.
\newblock {\em J. Reine Angew. Math.}, 472:197--217, 1996.

\bibitem{IndRepRie}
M.~A. Rieffel.
\newblock Induced representations of {$C\sp{\ast} $}-algebras.
\newblock {\em Advances in Math.}, 13:176--257, 1974.

\bibitem{RieProj}
M.~A. Rieffel.
\newblock Projective modules over higher-dimensional noncommutative tori.
\newblock {\em Canad. J. Math.}, 40(2):257--338, 1988.

\bibitem{RieNCTori}
M.~A. Rieffel.
\newblock Noncommutative tori---a case study of noncommutative differentiable
  manifolds.
\newblock In {\em Geometric and topological invariants of elliptic operators
  (Brunswick, ME, 1988)}, volume 105 of {\em Contemp. Math.}, pages 191--211.
  Amer. Math. Soc., Providence, RI, 1990.

\bibitem{RorClass}
M.~R{\o}rdam.
\newblock Classification of nuclear, simple {$C^*$}-algebras.
\newblock In {\em Classification of nuclear {$C^*$}-algebras. {E}ntropy in
  operator algebras}, volume 126 of {\em Encyclopaedia Math. Sci.}, pages
  1--145. Springer, Berlin, 2002.

\bibitem{RosCT}
J.~Rosenberg.
\newblock Continuous-trace algebras from the bundle theoretic point of view.
\newblock {\em J. Austral. Math. Soc. Ser. A}, 47(3):368--381, 1989.

\bibitem{RosComparison}
J.~Rosenberg.
\newblock Comparison between algebraic and topological {$K$}-theory for
  {B}anach algebras and {$C^*$}-algebras.
\newblock In {\em Handbook of {$K$}-theory. {V}ol. 1, 2}, pages 843--874.
  Springer, Berlin, 2005.

\bibitem{RosStringDuality}
J.~Rosenberg.
\newblock {\em Topology, {$C^\ast$}-algebras, and string duality}, volume 111
  of {\em CBMS Regional Conference Series in Mathematics}.
\newblock Published for the Conference Board of the Mathematical Sciences,
  Washington, DC, 2009.

\bibitem{SchDeloop}
M.~Schlichting.
\newblock Delooping the {$K$}-theory of exact categories.
\newblock {\em Topology}, 43(5):1089--1103, 2004.

\bibitem{SchSedano}
M.~Schlichting.
\newblock Higher algebraic {$K$}-theory.
\newblock In {\em Topics in algebraic and topological {$K$}-theory}, volume
  2008 of {\em Lecture Notes in Math.}, pages 167--241. Springer, Berlin, 2011.

\bibitem{UrsQuant}
U.~Schreiber.
\newblock Quantization via {L}inear homotopy types.
\newblock {\em arXiv:1402.7041}.

\bibitem{SchwedeBook}
S.~Schwede.
\newblock Symmetric spectra.
\newblock {\em preprint, available from the author{'}s homepage}.

\bibitem{SYZ}
A.~Strominger, S.-T. Yau, and E.~Zaslow.
\newblock Mirror symmetry is {$T$}-duality.
\newblock In {\em Winter School on Mirror Symmetry, Vector Bundles and
  Lagrangian Submanifolds (Cambridge, MA, 1999)}, volume~23 of {\em AMS/IP
  Stud. Adv. Math.}, pages 333--347. Amer. Math. Soc., Providence, RI, 2001.

\bibitem{SusWod1}
A.~A. Suslin and M.~Wodzicki.
\newblock Excision in algebraic {$K$}-theory and {K}aroubi's conjecture.
\newblock {\em Proc. Nat. Acad. Sci. U.S.A.}, 87(24):9582--9584, 1990.

\bibitem{SusWod2}
A.~A. Suslin and M.~Wodzicki.
\newblock Excision in algebraic {$K$}-theory.
\newblock {\em Ann. of Math. (2)}, 136(1):51--122, 1992.

\bibitem{Tab3}
G.~Tabuada.
\newblock Higher {$K$}-theory via universal invariants.
\newblock {\em Duke Math. J.}, 145(1):121--206, 2008.

\bibitem{TabGarden}
G.~Tabuada.
\newblock A guided tour through the garden of noncommutative motives.
\newblock In {\em Topics in noncommutative geometry}, volume~16 of {\em Clay
  Math. Proc.}, pages 259--276. Amer. Math. Soc., Providence, RI, 2012.

\bibitem{TomWin}
A.~S. Toms and W.~Winter.
\newblock Strongly self-absorbing {$C^*$}-algebras.
\newblock {\em Trans. Amer. Math. Soc.}, 359(8):3999--4029, 2007.

\bibitem{Waldhausen}
F.~Waldhausen.
\newblock Algebraic {$K$}-theory of spaces.
\newblock In {\em Algebraic and geometric topology ({N}ew {B}runswick,
  {N}.{J}., 1983)}, volume 1126 of {\em Lecture Notes in Math.}, pages
  318--419. Springer, Berlin, 1985.

\end{thebibliography}

\smallskip

\end{document}